\newtheorem{theorem}{Theorem}[section]
\newtheorem{lemma}{Lemma}[section]
\newtheorem{corollary}{Corollary}[section]
\newtheorem{remark}{Remark}[section]
\newtheorem{algorithm}{Algorithm}[section]
\journal{Journal of Computational Physics}
\begin{document}

\begin{frontmatter}



\title{A Multilevel Correction Method for Interior Transmission Eigenvalue Problem}


\author[lsec]{Hehu Xie}
\ead{hhxie@lsec.cc.ac.cn}

\author[fudan]{Xinming Wu\corref{cor1}}
\ead{wuxinming@fudan.edu.cn}

\cortext[cor1]{Corresponding author}

\address[lsec]{LSEC, ICMSEC, Academy of Mathematics and System Sciences,
Chinese Academy of Sciences, Beijing 100190, P.R. China.}
\address[fudan]{The Key Laboratory of Mathematics for Nonlinear Sciences,
School of Mathematical Sciences, Fudan University, Shanghai 200433, P.R. China.}

\begin{abstract}
In this paper, we give a numerical analysis for the transmission eigenvalue problem by the
finite element method. A type of multilevel correction method is proposed to
solve the transmission eigenvalue problem. The multilevel correction method
can transform the transmission eigenvalue solving in the finest finite element space
to a sequence of linear problems and some transmission eigenvalue solving
in a very low dimensional spaces. Since the main computational work is to solve the sequence of
linear problems, the multilevel correction method improves the overfull efficiency
of the transmission eigenvalue solving. Some numerical examples are
provided to validate the theoretical results and the efficiency of the proposed numerical scheme.
\end{abstract}

\begin{keyword}
Transmission eigenvalue problem \sep finite element method \sep error estimates \sep
multilevel correction method



\MSC 65N30 \sep 65N25 \sep 65L15 \sep 65B99.
\end{keyword}

\end{frontmatter}



\section{Introduction}
Recently, many researchers are interested in the transmission eigenvalue problem \cite{BonnetChesnelHaddar,CakoniCayorenColton,CakoniColtonMonkSun,CakoniGintidesHaddar,CakoniHaddar,ColtonKress,
ColtonMonkSun,ColtonPaivarintaSykvester,Kirsch,PaivarintaSylvester}.
The transmission eigenvalue problem arises in the study of the inverse scattering
for inhomogeneous media which not only has theoretical
importance \cite{CakoniHaddar,ColtonKress}, but also can be used to estimate the properties
of the scattering material  \cite{CakoniCayorenColton,CakoniGintidesHaddar,Sun_1}
 since they can be determined from the scattering data.
In the past few years, significant progress of the existence of transmission
eigenvalues and applications has been made. We refer the readers to the recent
papers \cite{BonnetChesnelHaddar,ChesnelCiarlet,CakoniHaddar}.

Meanwhile, there are also many papers to give the numerical treatment for the transmission eigenvalue problem  and the associated interior transmission problem
\cite{AnShen,BonnetChesnelCiarlet,ChesnelCiarlet,ColtonMonkSun,HsialLiuSunXu,
JiSunTurner,JiSunXie,Sun,WuChen}. But there are few papers
providing the corresponding theoretical analysis for their numerical methods
due to the difficulty that the problem is neither elliptic nor self-adjoint.
The paper \cite{JiSunXie} presents an accurate error estimate of the eigenvalue and eigenfunction
 approximations for the Helmhotz transmission eigenvalue problem based on the
iterative methods (bisection and secant) from \cite{Sun}.
The first aim of this paper is to give a theoretical analysis of the finite element method
for the transmission eigenvalue problem with the inhomogeneous media.

In the past few years, a new type of multilevel correction method is proposed to solve the eigenvalue
problem \cite{LinXie_Steklov,LinXie,Xie_Nonconforming}.
In the multilevel correction scheme, the solution on the finest mesh can be reduced to a
series of solutions of the eigenvalue problem in a very low dimensional
 space and a series of solutions of the boundary value problem on the multilevel meshes.
 This multilevel correction method gives a way to construct
a type of multigrid scheme for the  eigenvalue problem \cite{JiSunXie,Xie_IMA,Xie_JCP}.
The second aim of this paper is to propose a multilevel correction method for the
transmission eigenvalue problem based on the obtained error estimate results.

The rest of this paper is organized as follows. In Section \ref{Transmission_Eigenvalue_Problem},
we introduce the transmission eigenvalue problem and the corresponding theoretical results about
the eigenvalue distribution.
The finite element method and the corresponding error estimates are given in Section  \ref{FEM_Method}.
Section \ref{Multilevel_Correction_Method} is devoted to introducing a type of multilevel correction method for the
transmission eigenvalue problem. In Section \ref{Numerical_Results},
four examples are presented to validate the theoretical results and
 the efficiency of the proposed numerical methods.
Some concluding remarks are given in the last section.

\section{Transmission eigenvalue problem}\label{Transmission_Eigenvalue_Problem}
First, we introduce some notation and the transmission eigenvalue problem. The letter
$C$ (with or without subscripts) denotes a generic
positive constant which may be different at its different occurrences through the paper.
For convenience, the symbols $\lesssim$, $\gtrsim$ and $\approx$
will be used in this paper. Notations $x_1\lesssim y_1, x_2\gtrsim y_2$
and $x_3\approx y_3$, mean that $x_1\leq C_1y_1$, $x_2 \geq c_2y_2$
and $c_3x_3\leq y_3\leq C_3x_3$ for some constants $C_1, c_2, c_3$
and $C_3$ that are independent of mesh sizes.

In this paper, we are concerned with the transmission eigenvalues corresponding to
the scattering of acoustic waves by a bounded
simply connected inhomogeneous medium $\Omega\subset \mathcal{R}^d$ ($d=2, 3$).
The transmission eigenvalue problem is to find $k\in \mathcal{C}$,
 $(w,v)\in H^1(\Omega)\times H^1(\Omega)$ such that
\begin{equation}\label{Problem}
\left\{
\begin{array}{rcll}
-{\rm div}\big(A\nabla w\big)&=&k^2n(x)w,&{\rm in}\ \Omega,\\
-\Delta v &=&k^2v, &{\rm in}\ \Omega,\\
w-v&=&0,&{\rm on}\ \partial \Omega,\\
\frac{\partial w}{\partial \nu_{A}}-\frac{\partial v}{\partial\nu}&=&0,&{\rm on}\ \partial \Omega,
\end{array}
\right.
\end{equation}
where $\nu$ is the unit outward normal to the boundary $\partial \Omega$. There exists
a real number $\gamma>1$ such that the symmetric matrix $A(x)$ and the index of refraction $n(x)$ satisfy that
\begin{equation}\label{cond-A-n}
\xi\cdot A\xi>\gamma |\xi|^2\ \ \ \forall \xi\in \mathcal{R}^d, \quad
n(x)>\gamma, \  \ a.e. \ \ \mbox{in}\  \Omega.
\end{equation}
Values of $k$ such that there exists a nontrivial solution $(w,v)$
to (\ref{Problem}) are called transmission eigenvalues.

\begin{remark}\label{Remark_Condition}
As in \cite{BonnetChesnelCiarlet, Ciarlet2}, the numerical method and analysis can be extended to the case
that there exists a real number $0<\gamma<1$ such that the symmetric matrix $A(x)$ and the index of
refraction  $n(x)$ satisfy that
\begin{equation}\label{cond-A-n-2}
0<\xi\cdot A\xi<\gamma |\xi|^2\ \ \ \forall \xi\in \mathcal{R}^d, \quad
0<n(x)<\gamma, \  \ a.e. \ \ \mbox{in}\  \Omega.
\end{equation}
\end{remark}

Obviously, the eigenvalue problem (\ref{Problem}) can be transformed into the following version:
Find $\lambda\in \mathcal{C}$, $(u,w)\in H^1(\Omega)\times H^1(\Omega)$ such that
\begin{equation}\label{Problem_2}
\left\{
\begin{array}{rcll}
-{\rm div}\big(A\nabla w\big)+n(x)w&=&\lambda n(x)w,&{\rm in}\ \Omega,\\
-\Delta v+v&=&\lambda v, &{\rm in}\ \Omega,\\
w-v&=&0,&{\rm on}\ \partial \Omega,\\
\frac{\partial w}{\partial \nu_{A}}-\frac{\partial v}{\partial\nu}&=&0,&{\rm on}\ \partial \Omega,
\end{array}
\right.
\end{equation}
where $\lambda=k^2+1$. In the following of this paper, we mainly consider this eigenvalue problem.
There are some papers \cite{BonnetChesnelHaddar,CakoniGintidesHaddar,CakoniHaddar,Kirsch,PaivarintaSylvester}
being concerned with the distribution of the eigenvalues for the eigenvalue problem (\ref{Problem_2}).

In this paper, in order to give the analysis, we define the function spaces
$\mathbb V$ and $\mathbb W$ as follows
\begin{eqnarray}
\mathbb V&:=&\Big\{\mathbf\Psi
:=(\varphi,\psi)\in H^1(\Omega)\times H^1(\Omega)\ |\ \varphi-\psi\in H_0^1(\Omega)\Big\},\\
\mathbb W&:=&L^2(\Omega)\times L^2(\Omega)
\end{eqnarray}
equipped with the norms
\begin{eqnarray*}
\|\mathbf \Psi\|_{\mathbb V}=\Big(\|\varphi\|_1^2+\|\psi\|_1^2\Big)^{1/2}\ \ {\rm and}\ \
\|\mathbf \Psi\|_{\mathbb W}=\Big(\|\varphi\|_0^2+\|\psi\|_0^2\Big)^{1/2},
\end{eqnarray*}
respectively, where $\mathbf \Psi=(\varphi,\psi)\in\mathbb{V}$.

For the simplicity of notation, we define two sesquilinear forms
\begin{eqnarray}
a(\mathbf U,\mathbf \Psi)&=&\big(A\nabla w,\nabla \varphi\big)+\big(n(x)w,\varphi\big)
-\big(\nabla v,\nabla \psi\big)-\big(v,\psi\big),\\
b(\mathbf U,\mathbf \Psi)&=&\big(n(x)w,\varphi\big)-\big(v,\psi\big),
\end{eqnarray}
where $\mathbf U=(w,v), \mathbf \Psi=(\varphi,\psi)\in \mathbb{V}$.
The associated variational form for (\ref{Problem_2}) can be defined as follows:
Find $(\lambda,\mathbf U) \in \mathcal{C}\times\mathbb V$ such that $\|\mathbf U\|_{\mathbb W}=1$ and
\begin{eqnarray}\label{Problem_Weak}
a(\mathbf U,\mathbf \Psi)&=&\lambda b(\mathbf U,\mathbf \Psi), \ \ \ \forall \mathbf \Psi\in \mathbb V.
\end{eqnarray}
Then the corresponding adjoint eigenvalue problem is:
Find $(\lambda,\mathbf U^*) \in \mathcal{C}\times\mathbb V$ such that $\|\mathbf U^*\|_{\mathbb W}=1$ and
\begin{eqnarray}\label{Eigenvalue_Problem_Weak_Adjoint}
a(\mathbf \Psi,\mathbf U^*)&=&\lambda b(\mathbf \Psi, \mathbf U^*), \ \ \ \forall \mathbf \Psi\in \mathbb V.
\end{eqnarray}

In order to analyze the properties of the eigenvalue problem (\ref{Problem_Weak}), we introduce
the so-called $\mathbb T$-coercivity (inf-sup condition) for the bilinear form $a(\cdot,\cdot)$
(see, e.g., \cite{BonnetChesnelHaddar,BonnetChesnelCiarlet,ChesnelCiarlet,BonnetCiarletZwolf,Ciarlet2}).
In this paper, the notation $\mathbb T$ denotes an isomorphic operator from $\mathbb V$ to $\mathbb V$ which is
defined as follows
\begin{eqnarray}
\mathbb T\mathbf \Psi&=&(\varphi, 2\varphi-\psi),\ \ \ \ \forall \mathbf \Psi\in \mathbb V.
\end{eqnarray}
Similarly to \cite{BonnetChesnelHaddar,BonnetChesnelCiarlet,ChesnelCiarlet}, in order to give the eigenvalue distribution of (\ref{Problem_2}),
we also state the following $\mathbb T$-coercivity properties (inf-sup conditions).
\begin{theorem}\label{Inf_Sup_Theorem}
The bilinear forms $a(\cdot,\cdot)$ and $b(\cdot,\cdot)$ have the following inf-sup conditions
($\mathbb T$-coercivities):
\begin{eqnarray}
\inf_{0\neq \mathbf\Phi\in\mathbb V}\sup_{0\neq \mathbf \Psi\in \mathbb V}
\frac{a(\mathbf\Phi,\mathbf\Psi)}{\|\mathbf\Phi\|_{\mathbb V}\|\mathbf \Psi\|_{\mathbb V}}\geq \mu_a,\label{Inf-sup_left}\\
\inf_{0\neq \mathbf\Phi\in\mathbb V}\sup_{0\neq \mathbf \Psi\in \mathbb V}
\frac{a(\mathbf\Psi,\mathbf\Phi)}{\|\mathbf\Psi\|_{\mathbb V}\|\mathbf \Phi\|_{\mathbb V}}\geq \mu_a,\label{Inf-sup_right}
\end{eqnarray}
and
\begin{eqnarray}
\inf_{0\neq \mathbf\Phi\in\mathbb W}\sup_{0\neq \mathbf \Psi\in \mathbb W}
\frac{b(\mathbf\Phi,\mathbf\Psi)}{\|\mathbf\Phi\|_{\mathbb W}\|\mathbf \Psi\|_{\mathbb W}}\geq \mu_b,\label{Inf-sup_left_b}\\
\inf_{0\neq \mathbf\Phi\in\mathbb W}\sup_{0\neq \mathbf \Psi\in \mathbb W}
\frac{b(\mathbf\Psi,\mathbf\Phi)}{\|\mathbf\Psi\|_{\mathbb W}\|\mathbf \Phi\|_{\mathbb W}}\geq \mu_b,\label{Inf-sup_right_b}
\end{eqnarray}
for some positive constants $\mu_a$ and $\mu_b$.
\end{theorem}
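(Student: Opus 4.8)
The plan is to establish the four inf-sup conditions using the $\mathbb{T}$-coercivity technique: instead of directly bounding the sup from below, it suffices to exhibit, for each $\mathbf{\Phi}$, a single test function $\mathbf{\Psi}$ such that $a(\mathbf{\Phi},\mathbf{\Psi}) \gtrsim \|\mathbf{\Phi}\|_{\mathbb{V}}^2$ and $\|\mathbf{\Psi}\|_{\mathbb{V}} \lesssim \|\mathbf{\Phi}\|_{\mathbb{V}}$. The natural choice, dictated by the definition of $\mathbb{T}$ in the excerpt, is $\mathbf{\Psi} = \mathbb{T}\mathbf{\Phi} = (\varphi, 2\varphi - \psi)$ when $\mathbf{\Phi} = (\varphi,\psi)$. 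First I would verify that $\mathbb{T}$ is indeed an isomorphism of $\mathbb{V}$: it is linear, it maps $\mathbb{V}$ into itself because $(\varphi) - (2\varphi-\psi) = -(\varphi-\psi) \in H_0^1(\Omega)$, it is its own inverse ($\mathbb{T}^2 = \mathrm{Id}$), and it is bounded, with $\|\mathbb{T}\mathbf{\Phi}\|_{\mathbb{V}}^2 = \|\varphi\|_1^2 + \|2\varphi-\psi\|_1^2 \le C\|\mathbf{\Phi}\|_{\mathbb{V}}^2$. Hence $\|\mathbb{T}\mathbf{\Phi}\|_{\mathbb{V}} \approx \|\mathbf{\Phi}\|_{\mathbb{V}}$.

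Next I would compute $a(\mathbf{\Phi},\mathbb{T}\mathbf{\Phi})$ explicitly. Writing $\mathbf{\Phi} = (w,v)$, by definition
$$
a(\mathbf{\Phi},\mathbb{T}\mathbf{\Phi}) = (A\nabla w,\nabla w) + (n w, w) - (\nabla v, \nabla(2w-v)) - (v, 2w-v),
$$
which expands to
$$
(A\nabla w,\nabla w) + (n w, w) + \|\nabla v\|_0^2 + \|v\|_0^2 - 2(\nabla v, \nabla w) - 2(v,w).
$$
Using the coercivity assumption (\ref{cond-A-n}), namely $(A\nabla w,\nabla w) \ge \gamma\|\nabla w\|_0^2$ and $(nw,w) \ge \gamma\|w\|_0^2$ with $\gamma > 1$, and applying Young's inequality to the cross terms in the form $2|(\nabla v,\nabla w)| \le \varepsilon\|\nabla w\|_0^2 + \varepsilon^{-1}\|\nabla v\|_0^2$ (and similarly for the $L^2$ cross term), I would obtain
$$
a(\mathbf{\Phi},\mathbb{T}\mathbf{\Phi}) \ge (\gamma - \varepsilon)\|w\|_1^2 + (1 - \varepsilon^{-1})\|v\|_1^2.
$$
Since $\gamma > 1$, one can choose $\varepsilon \in (1,\gamma)$, making both coefficients strictly positive; call the minimum $c_0 > 0$. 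Combined with $\|\mathbb{T}\mathbf{\Phi}\|_{\mathbb{V}} \le C\|\mathbf{\Phi}\|_{\mathbb{V}}$, this gives
$$
\sup_{0\neq\mathbf{\Psi}\in\mathbb{V}} \frac{a(\mathbf{\Phi},\mathbf{\Psi})}{\|\mathbf{\Phi}\|_{\mathbb{V}}\|\mathbf{\Psi}\|_{\mathbb{V}}} \ge \frac{a(\mathbf{\Phi},\mathbb{T}\mathbf{\Phi})}{\|\mathbf{\Phi}\|_{\mathbb{V}}\|\mathbb{T}\mathbf{\Phi}\|_{\mathbb{V}}} \ge \frac{c_0}{C} =: \mu_a,
$$
which is (\ref{Inf-sup_left}). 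For (\ref{Inf-sup_right}) I would note that $a(\mathbb{T}\mathbf{\Phi},\mathbf{\Phi}) = a(\mathbf{\Phi},\mathbb{T}\mathbf{\Phi})$ — because the form $a$ is symmetric in the sense that swapping the two arguments and applying $\mathbb{T}$ to both leaves the bilinear expression invariant (each term $(A\nabla\cdot,\nabla\cdot)$, $(n\cdot,\cdot)$, etc., is symmetric), so the same estimate holds with the roles exchanged, using $\mathbf{\Psi} = \mathbb{T}\mathbf{\Phi}$ on the left slot; alternatively one checks $a(\mathbb{T}\mathbf{\Phi},\mathbb{T}(\mathbb{T}\mathbf{\Phi})) = a(\mathbb{T}\mathbf{\Phi},\mathbf{\Phi})$ and repeats the computation. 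The conditions (\ref{Inf-sup_left_b})–(\ref{Inf-sup_right_b}) for $b$ on $\mathbb{W} = L^2\times L^2$ are easier: take $\mathbf{\Psi} = (\varphi,-\psi)$ for $\mathbf{\Phi}=(\varphi,\psi)$ (no boundary constraint is needed since $\mathbb{W}$ is just the product of $L^2$ spaces), so that $b(\mathbf{\Phi},\mathbf{\Psi}) = (n\varphi,\varphi) + (\psi,\psi) \ge \gamma\|\varphi\|_0^2 + \|\psi\|_0^2 \ge \min(\gamma,1)\,\|\mathbf{\Phi}\|_{\mathbb{W}}^2$, while $\|\mathbf{\Psi}\|_{\mathbb{W}} = \|\mathbf{\Phi}\|_{\mathbb{W}}$; this yields $\mu_b = \min(\gamma,1) = 1$ (using $\gamma>1$), and the symmetry argument again covers the transposed version (\ref{Inf-sup_right_b}).

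The main obstacle is getting the cross-term bookkeeping in the computation of $a(\mathbf{\Phi},\mathbb{T}\mathbf{\Phi})$ exactly right so that the two remaining quadratic forms $(\gamma-\varepsilon)\|w\|_1^2$ and $(1-\varepsilon^{-1})\|v\|_1^2$ are simultaneously coercive — this is precisely where the hypothesis $\gamma > 1$ is used and is non-negotiable, and it is also the step that would need to be redone with the operator $\mathbb{T}\mathbf{\Psi} = (2\psi - \varphi,\psi)$ under the alternative assumption (\ref{cond-A-n-2}) of Remark \ref{Remark_Condition}. A secondary point requiring a little care is the $H^1$-bound $\|2w-v\|_1 \le 2\|w\|_1 + \|v\|_1$ used to control $\|\mathbb{T}\mathbf{\Phi}\|_{\mathbb{V}}$, which is immediate from the triangle inequality but must be tracked to make the constant $\mu_a$ explicit in terms of $\gamma$.
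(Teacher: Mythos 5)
Your proposal is correct and follows essentially the same $\mathbb T$-coercivity argument as the paper: test with $\mathbb T\mathbf\Phi=(\varphi,2\varphi-\psi)$, use (\ref{cond-A-n}) and Young's inequality on the cross terms, and choose the Young parameter in the window opened by $\gamma>1$ (your $\varepsilon\in(1,\gamma)$ is the paper's $\delta=1/\varepsilon\in(1/\gamma,1)$). The only (immaterial) deviation is that for $b$ you test with $(\varphi,-\psi)$, which is admissible in $\mathbb W$ and gives the cleaner constant $\mu_b=1$, whereas the paper reuses $\mathbb T$ there as well.
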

\begin{proof}
From the conditions of the matrix $A$ and the refraction index $n(x)$, the following estimates hold
\begin{eqnarray}
a(\mathbf \Psi,\mathbb T\mathbf \Psi)&=&(A\nabla\varphi,\nabla\varphi)+(n(x)\varphi,\varphi)
-(\nabla\psi,\nabla(2\varphi-\psi))\nonumber\\
&&\ \ \ \ -(\psi,(2\varphi-\psi))\nonumber\\
&\geq&\gamma \|\varphi\|_1^2+\|\psi\|_1^2 - 2(\nabla\psi,\nabla\varphi) - 2(\psi,\varphi)\nonumber\\
&\geq& \Big(\gamma-\frac{1}{\delta}\Big)\|\varphi\|_1^2+\Big(1-\delta\Big)\|\psi\|_1^2\nonumber\\
&\geq& C\|\mathbf\Psi\|_{\mathbb V}^2.
\end{eqnarray}
Since $\gamma>1$, we can choose $\delta \in \Big(\frac{1}{\gamma},1\Big)$ such that $a(\cdot,\cdot)$
 is $\mathbb T$-coercive which means there exists a positive constant $\mu_a$ such that the desired result (\ref{Inf-sup_left})
 holds. In the same way, we can also prove the result (\ref{Inf-sup_right}).

Similarly, it is easy to prove that
\begin{eqnarray}\
b(\mathbf \Psi,\mathbb T\mathbf \Psi)&=&(n(x)\varphi,\varphi)-(\psi,2\varphi-\psi)\nonumber\\
&\geq& \Big(\gamma-\frac{1}{\delta}\Big)\|\varphi\|_0^2+(1-\delta)\|\psi\|_0^2.
\end{eqnarray}
Since $\gamma>1$, we can choose $\delta \in \Big(\frac{1}{\gamma},1\Big)$ such that $b(\cdot,\cdot)$
 is $\mathbb T$-coercive in $\mathbb W\times \mathbb W$ which means the inf-sup conditions (\ref{Inf-sup_left_b})
 and (\ref{Inf-sup_right_b}) hold for some positive constant $\mu_b$.
 \end{proof}

We introduce the operators $\mathbb K,\ \mathbb K_*\in \mathcal{L}(\mathbb V)$ defined by the equations
\begin{eqnarray}
a(\mathbb K \mathbf \Phi,\mathbf\Psi)=b(\mathbf \Phi,\mathbf\Psi),\ \
a(\mathbf \Psi, \mathbb K_* \mathbf \Phi)=b(\mathbf\Psi,\mathbf \Phi),\ \ \ \ \ \forall\mathbf \Phi, \mathbf\Psi\in\mathbb V.
\end{eqnarray}
From Theorem \ref{Inf_Sup_Theorem}, it is easy to know the operator $\mathbb K$ and $\mathbb K_*$ are linear bijective operators.
 Then the eigenvalue problem (\ref{Problem_Weak}) can be written as an operator form
 for $\lambda\neq 0$ (denoting $\mu:=\lambda^{-1}$):
\begin{eqnarray}
\mathbb K\mathbf U=\mu \mathbf U,
\end{eqnarray}
with
\begin{eqnarray}
\mathbb K_* \mathbf U^*=\bar{\mu}\mathbf U^*
\end{eqnarray}
for the adjoint eigenvalue problem (\ref{Eigenvalue_Problem_Weak_Adjoint}).
The $\mathbb T$-coercivity conditions (\ref{Inf-sup_left})-(\ref{Inf-sup_right})
and (\ref{Inf-sup_left_b})-(\ref{Inf-sup_right_b})
guarantee that every eigenvalue $\lambda$ is nonzero. From (\ref{Inf-sup_left})
and (\ref{Inf-sup_right}) and the compact embedding theorem
of Sobolev spaces, it is well known that the operators $\mathbb K$ and $\mathbb K_*$ are compact.
 Thus the spectral theory for compact
operators  gives us a complete characterization of the eigenvalue problem (\ref{Problem_Weak}).

There is a countable set of eigenvalues of (\ref{Problem_Weak}). Let $\lambda$ be an
eigenvalue of problem (\ref{Problem_Weak}). There exists a smallest integer $\alpha$ such that
\begin{eqnarray}
{\rm Null}((\mathbb K-\mu)^{\alpha})={\rm Null}((\mathbb K-\mu)^{\alpha+1}),
\end{eqnarray}
where ${\rm Null}$ denotes the null space and we use the notation $\mu=\lambda^{-1}$.
Let
\begin{eqnarray*}
M(\lambda)=M_{\lambda,\mu}={\rm Null}((\mathbb K-\mu)^{\alpha}),\quad
Q(\lambda)=Q_{\lambda,\mu}={\rm Null}(\mathbb K-\mu)
\end{eqnarray*}
denote the algebraic and geometric eigenspaces,
 respectively.
The subspaces $M(\lambda)$ and $Q(\lambda)\subset M(\lambda)$ are finite dimensional. The numbers
$m={\rm dim}M(\lambda)$ and $q={\rm dim}Q(\lambda)$ are called the algebraic
and the geometric multiplicities
of $\mu$ (and $\lambda$). The vectors in $M(\lambda)$ are generalized eigenvectors.
 The order of a generalized
eigenvector is the smallest integer $p$ such that $(\mathbb K-\mu)^p\mathbf U=0$
(vectors in $Q(\lambda)$ being generalized eigenvectors of
order $1$). Let us point out that a generalized eigenvector $\mathbf U^p$ of
order $p$ satisfies
\begin{eqnarray}\label{Recurisve_Law}
a(\mathbf U^p,\mathbf \Psi)&=&\lambda b(\mathbf U^p,\mathbf \Psi)
+\lambda a(\mathbf U^{p-1},\mathbf \Psi),\ \ \ \ \forall \mathbf\Psi\in \mathbb V,
\end{eqnarray}
where $\mathbf U^{p-1}$ is a generalized eigenvector of order $p-1$.

Similarly we define the spaces of (generalized) eigenvectors for the adjoint problem
\begin{eqnarray*}
M^*(\lambda)=M_{\lambda,\mu}^*={\rm Null}((\mathbb K_*-\bar{\mu})^{\alpha}),\quad Q^*(\lambda)
=Q^*_{\lambda,\mu}={\rm Null}(\mathbb K_*-\bar{\mu}).
\end{eqnarray*}
Note that $\mu$ is an eigenvalue of $\mathbb K$ ($\lambda$ is an eigenvalue of problem
(\ref{Problem_Weak})) if and only if
$\bar{\mu}$ is an eigenvalue of $\mathbb K_*$ (${ \bar{\lambda}}$ is an eigenvalue of adjoint problem
(\ref{Eigenvalue_Problem_Weak_Adjoint})) with the ascent $\alpha$
and the  algebraic multiplicity $m$ for both eigenvalues being the same.

\section{Finite element method for Transmission eigenvalue problem}\label{FEM_Method}
Now, let us define the finite element approximations for the problem
(\ref{Problem_Weak}). First we generate a shape-regular
decomposition of the computational domain $\Omega\subset \mathcal{R}^d\
(d=2,3)$ into triangles or rectangles for $d=2$ (tetrahedrons or
hexahedrons for $d=3$). The diameter of a cell $K\in\mathcal{T}_h$
is denoted by $h_K$. The mesh diameter $h$ describes the maximum
diameter of all cells $K\in\mathcal{T}_h$. Based on the mesh
$\mathcal{T}_h$, { we construct} a finite element space denoted by
$\mathbb V_h\subset \mathbb V$. The same argument as in the beginning of this section
illustrates that the following discrete inf-sup conditions also hold
\begin{eqnarray}
\|\mathbf \Phi_h\|_{\mathbb V} \lesssim \sup_{\mathbf \Psi_h\in \mathbb V_h}
\frac{a(\mathbf \Phi_h,\mathbf\Psi_h)}{\|\mathbf \Psi_h\|_{\mathbb V}}\ \ {\rm and}
\ \ \|\mathbf \Phi_h\|_{\mathbb V} \lesssim \sup_{\mathbf \Psi_h\in \mathbb V_h}
\frac{a(\mathbf\Psi_h,\mathbf \Phi_h)}{\|\mathbf \Psi_h\|_{\mathbb V}}.
\end{eqnarray}

The standard finite element method for the problem (\ref{Problem_Weak}) is defined as follows:
 Find $(\lambda_h,\mathbf U_h)\in \mathcal{C}\times \mathbb V_h$ such that $\|\mathbf U_h\|_{\mathbb W}=1$ and
\begin{eqnarray}\label{Eigenvalue_Problem_Weak_Discrete}
a(\mathbf U_h,\mathbf \Psi_h)&=&\lambda_h b(\mathbf U_h,\mathbf\Psi_h),\ \ \ \ \ \forall \mathbf\Psi_h\in \mathbb V_h.
\end{eqnarray}
Similarly, the discretization for the adjoint problem (\ref{Eigenvalue_Problem_Weak_Adjoint})
can be defined as:
Find $(\lambda_h,\mathbf U_h^*)\in \mathcal{C}\times \mathbb V_h$ such that $\|\mathbf U_h^*\|_{\mathbb W}=1$ and
\begin{eqnarray}\label{Eigenvalue_Problem_Weak_Discrete_Adjoint}
a(\mathbf\Psi_h,\mathbf U_h^*)&=&\lambda_h b(\mathbf\Psi_h,\mathbf U_h^*),\ \ \ \ \ \forall \mathbf\Psi_h\in \mathbb V_h.
\end{eqnarray}

By introducing Galerkin projections $P_h,\ P_h^*\in \mathcal{L}(\mathbb V,\mathbb V_h)$
with the following equations
\begin{eqnarray*}
a(P_h\mathbf\mathbf\Phi,\mathbf \Psi_h)&=&a(\mathbf\mathbf\Phi,\mathbf \Psi_h),\ \ \ \ \forall \mathbf\Psi_h\in\mathbb V_h,\\
a(\mathbf \Psi_h, P_h^*\mathbf\mathbf\Phi)&=&a(\mathbf \Psi_h, \mathbf\mathbf\Phi),
\ \ \ \ \forall \mathbf\Psi_h\in\mathbb V_h,
\end{eqnarray*}
the equation (\ref{Eigenvalue_Problem_Weak_Discrete}) can be rewritten as an operator
 form with $\mu_h:=\lambda_h^{-1}$ ($P_h$ is a bounded operator),
\begin{eqnarray}
P_h\mathbb K\mathbf U_h=\mu_h\mathbf U_h.
\end{eqnarray}
Similarly for the adjoint problem (\ref{Eigenvalue_Problem_Weak_Discrete_Adjoint}), we have
\begin{eqnarray}
P_h^*\mathbb K_*\mathbf U_h^*=\bar{\mu}_h\mathbf U_h^*.
\end{eqnarray}

Let $\mu$ be an eigenvalue (with algebraic multiplicity $m$)
of the compact operator $\mathbb K$.
If $\mathbb K$ is approximated by a sequence of compact operators $\mathbb K_h$
converging to $\mathbb K$ in norm, i.e.,
$\lim\limits_{h\rightarrow 0+}{ \|\mathbb K-\mathbb K_h\|_{\mathbb V}}=0$,
then for $h$ sufficiently small $\mu$ is approximated by
exactly $m$ eigenvalues $\{\mu_{j,h}\}_{j=1,\cdots,m}$
(counted according to their algebraic multiplicities)
of $\mathbb K_h$, i.e.,
\begin{eqnarray*}
\lim_{h\rightarrow 0+}\mu_{j,h}=\mu\ \ \ \ \ {\rm for}\ j=1,\cdots,m.
\end{eqnarray*}
The space of generalized eigenvectors of $\mathbb K$ is approximated by the subspace
\begin{eqnarray}
M_h(\lambda)=M_h^{\lambda,\mu}=\sum_{j=1}^m{\rm Null}((\mathbb K_h-\mu_{j,h})^{\alpha_{\mu_{j,h}}}),
\end{eqnarray}
where $\alpha_{\mu_{j,h}}$ is the smallest integer such that
${\rm Null}\big((\mathbb K_h-\mu_{j,h})^{\alpha_{\mu_{j,h}}}\big)
={\rm Null}\big((\mathbb K_h-\mu_{j,h})^{\alpha_{\mu_{j,h}}+1}\big)$.
We similarly define the space $Q_h(\lambda)=Q_h^{\lambda,\mu}=\sum_{j=1}^m{\rm Null}(\mathbb K_h-\mu_{j,h})$ and the
counterparts $M_h^*(\lambda)$, $Q_h^*(\lambda)$ for
the adjoint problem.

Now, we describe a computational scheme to produce the algebraic eigenspace $M_h(\lambda)$ from the geometric eigenspace $Q_h(\lambda)=\{\mathbf U_{1,h},\cdots,\mathbf U_{q,h}\}$
corresponding to eigenvalues $\{\lambda_{1,h},\cdots,\lambda_{q,h}\}$, which
 converge to the same eigenvalue $\lambda$.

Starting from all eigenfunctions in the geometric eigenspace $Q_h(\lambda)$ (of order $1$),
we use the following recursive process to compute
 algebraic eigenspaces (c.f. \cite{Shaidurov})
\begin{equation}
\hskip-0.2cm \left\{
\begin{array}{rcl}
a(\mathbf U_{j,h}^{p},\mathbf \Psi_h)-\lambda_{j,h}
b(\mathbf U_{j,h}^{p},\mathbf \Psi_h)&=&
\lambda_{j,h}a(\mathbf U_{j,h}^{p-1},\mathbf\Psi_h),
\ \forall \mathbf \Psi_h\in \mathbb  V_h,\\
b(\mathbf U_{j,h}^{p}, \mathbf\Psi_h)&=&0,\ \ \ \
\quad\quad\quad\quad\quad\ \forall \mathbf \Psi_h\in Q_h({\lambda}),
\end{array}
\right.
\end{equation}
where $p\geq 2$, $\mathbf U_{j,h}^{p}$ is the general eigenfunction of order
$p$ and $\mathbf U_{j,h}^1=\mathbf U_{j,h}\in Q_h({\lambda})$ for $j=1,\cdots,q$.

With the above process, we generate the algebraic eigenspace
$$M_h(\lambda)=\{\mathbf U_{1,h},\cdots, \mathbf U_{q,h},\cdots, \mathbf U_{m,h}\}$$
corresponding to eigenvalues $\{\lambda_{1,h},\cdots,\lambda_{q,h},\cdots, \lambda_{m,h}\}$,
 which converge to the same eigenvalue $\lambda$.  Similarly, we can produce the
adjoint algebraic eigenspace $M_h^*(\lambda)$ from the geometric eigenspace $Q_h^*(\lambda)$.

For two linear spaces $A$ and $B$, we denote
\begin{eqnarray*}
\widehat{\Theta}(A,B) = \sup_{\mathbf \Phi\in A,\|\mathbf \Phi\|_{\mathbb V}=1}
\inf_{\mathbf\Psi\in B}\|\mathbf\Phi-\mathbf\Psi\|_{\mathbb V},\ \
\widehat{\Phi}(A,B) = \sup_{\mathbf \Phi\in A,\|\mathbf \Phi\|_{\mathbb W}=1}
\inf_{\mathbf\Psi\in B}\|\mathbf\Phi-\mathbf\Psi\|_{\mathbb W},
\end{eqnarray*}
 and define gaps between $A$ and $B$ in $\|\cdot\|_{\mathbb V}$ as
\begin{eqnarray}
\Theta(A,B)=\max\big\{\widehat{\Theta}(A,B), \widehat{\Theta}(B,A)\big\},
\end{eqnarray}
and in $\|\cdot\|_{\mathbb W}$ as
\begin{eqnarray}
\Phi(A,B)=\max\big\{\widehat{\Phi}(A,B), \widehat{\Phi}(B,A)\big\}.
\end{eqnarray}

Before introducing the convergence results of the finite element approximation for
nonsymmetric eigenvalue problems, we define the following notations
\begin{eqnarray}
&&\delta_h(\lambda)=\sup_{\mathbf U\in M(\lambda),\|\mathbf U\|_{\mathbb V}=1}
\inf_{\mathbf \Psi_h\in \mathbb V_h}\|\mathbf U-\mathbf \Psi_h\|_{\mathbb V},\\
&&\delta_h^*(\lambda)=\sup_{\mathbf U\in M^*(\lambda),\|\mathbf U\|_{\mathbb V}=1}
\inf_{\mathbf \Psi_h\in \mathbb V_h}\|\mathbf U-\mathbf\Psi_h\|_{\mathbb V},\\
&&\rho_h(\lambda)=\sup_{\mathbf U\in M(\lambda),\|\mathbf U\|_{\mathbb W}=1}
\inf_{\mathbf\Psi_h\in \mathbb V_h}\|\mathbf U-\mathbf\Psi_h\|_{\mathbb W},\\
&&\rho_h^*(\lambda)=\sup_{\mathbf U\in M^*(\lambda),\|\mathbf U\|_{\mathbb W}=1}
\inf_{\mathbf\Psi_h\in \mathbb  V_h}\|\mathbf U-\mathbf\Psi_h\|_{\mathbb W},\\
&&\eta_a(h)=\sup_{\mathbf\Phi\in\mathbb V,\|\mathbf\Phi\|_{\mathbb W}=1}
\inf_{\mathbf\Psi_h\in \mathbb V_h}\|\mathbb K \mathbf\Phi-\mathbf\Psi_h\|_{\mathbb V},\\
&&\eta_a^*(h)=\sup_{\mathbf\Phi\in\mathbb V,\|\mathbf\Phi\|_{\mathbb W}=1}
\inf_{\mathbf\Psi_h\in \mathbb V_h}\|\mathbb K_* \mathbf\Phi-\mathbf\Psi_h\|_{\mathbb V}.
\end{eqnarray}
In order to derive error bounds for eigenpair approximations in the
weak norm $\|\cdot\|_{\mathbb W}$, we need the following error estimates in the weak norm
$\|\cdot\|_{\mathbb W}$
of the finite element approximation.
\begin{lemma}\label{Negative_norm_estimate_Lemma}
(\cite[Lemma 3.3 and Lemma 3.4]{BabuskaOsborn})
\begin{eqnarray}
\lim_{h\rightarrow0}\eta_a(h)=0,\ \ \ \lim_{h\rightarrow0}\eta_a^*(h)=0,
\end{eqnarray}
and
\begin{eqnarray}\label{Negative_norm_Error}
\rho_h(\lambda)&\lesssim& \eta_a^*(h)\delta_h(\lambda),\\
\rho_h^*(\lambda)&\lesssim& \eta_a(h)\delta_h^*(\lambda).
\end{eqnarray}
\end{lemma}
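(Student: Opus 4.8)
The statement to prove is Lemma~\ref{Negative_norm_estimate_Lemma}, which is quoted from \cite[Lemma 3.3 and Lemma 3.4]{BabuskaOsborn}, so the plan is to adapt the Aubin--Nitsche / negative-norm duality argument of Babu\v{s}ka--Osborn to the present $\mathbb T$-coercive setting. First I would establish the two convergence statements $\eta_a(h)\to0$ and $\eta_a^*(h)\to0$. Since $\mathbb K$ maps $\mathbb W$ into $\mathbb V$ (by the $\mathbb T$-coercivity of $a(\cdot,\cdot)$ from Theorem~\ref{Inf_Sup_Theorem} and the boundedness of $b(\cdot,\cdot)$ on $\mathbb W$), and since the unit ball of $\mathbb W$ is mapped by $\mathbb K$ into a relatively compact subset of $\mathbb V$ (compactness of $\mathbb K$), the quantity $\eta_a(h)=\sup_{\|\mathbf\Phi\|_{\mathbb W}=1}\inf_{\mathbf\Psi_h\in\mathbb V_h}\|\mathbb K\mathbf\Phi-\mathbf\Psi_h\|_{\mathbb V}$ is the approximation error, uniform over a compact set, of functions in $\mathbb V$ by the subspaces $\mathbb V_h$. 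The density of $\bigcup_h\mathbb V_h$ in $\mathbb V$ together with a standard compactness argument (cover the image by finitely many $\varepsilon$-balls, approximate the centers) then forces $\eta_a(h)\to0$; the same argument applied to $\mathbb K_*$ gives $\eta_a^*(h)\to0$.

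For the negative-norm estimate \eqref{Negative_norm_Error}, the key is a duality identity. Fix $\mathbf U\in M(\lambda)$ with $\|\mathbf U\|_{\mathbb W}=1$; I want to bound $\inf_{\mathbf\Psi_h}\|\mathbf U-\mathbf\Psi_h\|_{\mathbb W}$, and the natural candidate for $\mathbf\Psi_h$ is a suitable projection. Because $\mathbf U$ is a generalized eigenvector, $\mathbb K\mathbf U = \mu\mathbf U + \mu\mathbf U^{p-1}$ (from the recursion \eqref{Recurisve_Law}, rewritten in operator form as $(\mathbb K-\mu)\mathbf U = \mu\,\mathbb K\mathbf U^{p-1}$-type relations), so $\mathbf U$ itself lies in $\mathbb K(M(\lambda))$ up to lower-order generalized eigenvectors; an induction on the order $p$ reduces everything to controlling $\|\mathbb K\mathbf\Phi - P_h\mathbb K\mathbf\Phi\|_{\mathbb W}$ for $\mathbf\Phi$ ranging over $M(\lambda)$. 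The weak-norm error of the Galerkin projection is estimated by the usual dual trick: for any $\mathbf G\in\mathbb W$, introduce the adjoint solution $\mathbb K_*\mathbf G\in\mathbb V$, write $b(\mathbb K\mathbf\Phi-P_h\mathbb K\mathbf\Phi,\ \mathbf G)$ using the defining relations of $\mathbb K$, $P_h$ and $\mathbb K_*$ as $a(\mathbb K\mathbf\Phi-P_h\mathbb K\mathbf\Phi,\ \mathbb K_*\mathbf G - \mathbf\Psi_h^*)$ for arbitrary $\mathbf\Psi_h^*\in\mathbb V_h$ (Galerkin orthogonality in the second slot), then bound by $\|a\|\,\|\mathbb K\mathbf\Phi-P_h\mathbb K\mathbf\Phi\|_{\mathbb V}\cdot\inf_{\mathbf\Psi_h^*}\|\mathbb K_*\mathbf G-\mathbf\Psi_h^*\|_{\mathbb V}\le \|a\|\,\|\mathbb K\mathbf\Phi-P_h\mathbb K\mathbf\Phi\|_{\mathbb V}\,\eta_a^*(h)\|\mathbf G\|_{\mathbb W}$. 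Combining with the $\mathbb W$-inf-sup for $b(\cdot,\cdot)$ (constant $\mu_b$) to pass from the bilinear form back to the norm, and with the quasi-optimality $\|\mathbb K\mathbf\Phi-P_h\mathbb K\mathbf\Phi\|_{\mathbb V}\lesssim\inf_{\mathbf\Psi_h}\|\mathbb K\mathbf\Phi-\mathbf\Psi_h\|_{\mathbb V}\lesssim\delta_h(\lambda)$ (using that $\mathbb K\mathbf\Phi$ and its lower-order pieces span, essentially, $M(\lambda)$), yields $\rho_h(\lambda)\lesssim\eta_a^*(h)\delta_h(\lambda)$. The second inequality follows by swapping the roles of the problem and its adjoint.

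The main obstacle is bookkeeping rather than a single deep step: the fact that $a(\cdot,\cdot)$ is only $\mathbb T$-coercive (not coercive) means I cannot directly invoke C\'ea's lemma, and instead must use the discrete inf-sup conditions stated just before \eqref{Eigenvalue_Problem_Weak_Discrete} to get the quasi-optimality $\|\mathbf\Phi - P_h\mathbf\Phi\|_{\mathbb V}\lesssim\inf_{\mathbf\Psi_h\in\mathbb V_h}\|\mathbf\Phi-\mathbf\Psi_h\|_{\mathbb V}$ with a constant depending on $\mu_a$ and $\|a\|$. Care is also needed in the induction over the Jordan chain: a generalized eigenvector of order $p$ is approximated in terms of $\mathbb K$ applied to the chain below it, so the estimate for $\rho_h(\lambda)$ genuinely sees the whole algebraic eigenspace $M(\lambda)$, which is exactly why $\delta_h(\lambda)$ (the $\mathbb V$-best-approximation over all of $M(\lambda)$) appears on the right-hand side rather than a single eigenfunction's error. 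Once this reduction is in place, the duality estimate is routine and the constants are all traceable to $\mu_a$, $\mu_b$, the continuity constants of $a$ and $b$, and the ascent $\alpha$.
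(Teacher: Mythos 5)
The paper offers no proof of this lemma at all---it is quoted with only the citation to Babu\v{s}ka--Osborn---and your reconstruction is precisely the argument of the cited Lemmas 3.3--3.4 (relative compactness of the image of the $\mathbb W$-unit ball under $\mathbb K$ plus density of $\bigcup_h\mathbb V_h$ for the two limits; Aubin--Nitsche duality through $\mathbb K_*$, Galerkin orthogonality, the $\mathbb W$-inf-sup for $b$, and the discrete-inf-sup quasi-optimality for the weak-norm bounds), so it is correct and takes essentially the same route. The only points you gloss over are (i) that the compactness you invoke must be that of $\mathbb K$ as an operator from $\mathbb W$ into $\mathbb V$, whereas the paper only records compactness on $\mathbb V$ (it does follow, since $b$ is $\mathbb W$-bounded and $\mathbb W'\hookrightarrow\mathbb V'$ compactly), and (ii) the norm equivalence on the finite-dimensional space $M(\lambda)$ needed to reconcile the $\mathbb W$-normalization in $\rho_h(\lambda)$ with the $\mathbb V$-normalization in $\delta_h(\lambda)$---both routine.
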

Base on the general theory of the error estimates for the eigenvalue problems by the finite element method
\cite[Section 8]{BabuskaOsborn}, we have the following results for the transmission eigenvalue problem.
\begin{theorem}\label{Error_Estimate_Theorem}
When the mesh size $h$ is small enough, we have
\begin{eqnarray}
&&\Theta(M(\lambda),M_h(\lambda))\lesssim \delta_h(\lambda),\ \ \
\Theta(M^*(\lambda),M^*_h(\lambda))\lesssim \delta^*_h(\lambda),\\
&&\Phi(M(\lambda),M_h(\lambda))\lesssim \rho_h(\lambda),\ \ \
\Phi(M^*(\lambda),M^*_h(\lambda))\lesssim \rho^*_h(\lambda),\\
&&|\lambda-\widehat{\lambda}_{h}|\lesssim \delta_h(\lambda)\delta_h^*(\lambda),
\end{eqnarray}
where $\widehat{\lambda}_h={ \frac{1}{m}}\sum_{j=1}^m\lambda_{j,h}$ with $\lambda_{1,h},\cdots,\lambda_{m,h}$
converging to $\lambda$.
\end{theorem}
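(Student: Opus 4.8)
The plan is to reduce the statement to the abstract convergence theory for non‑selfadjoint eigenvalue problems developed by Babuška and Osborn (\cite[Section 8]{BabuskaOsborn}), which applies once one verifies that the discrete solution operators $\mathbb K_h := P_h\mathbb K$ converge to $\mathbb K$ in the operator norm of $\mathcal L(\mathbb V)$, together with the analogous statement for the adjoints $\mathbb K_h^* := P_h^*\mathbb K_*$. The $\mathbb T$‑coercivity established in Theorem~\ref{Inf_Sup_Theorem}, together with its discrete counterpart stated just before (\ref{Eigenvalue_Problem_Weak_Discrete}), guarantees that the Galerkin projections $P_h$, $P_h^*$ are uniformly bounded on $\mathbb V$ and that $(I-P_h)\mathbf\Phi\to 0$ in $\mathbb V$ for each fixed $\mathbf\Phi$; since $\mathbb K$ is compact, this pointwise convergence upgrades to norm convergence $\|\mathbb K-P_h\mathbb K\|_{\mathcal L(\mathbb V)}\to0$, and likewise for the adjoint. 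This places us squarely in the setting of \cite[Section 8]{BabuskaOsborn}.

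Once norm resolvent convergence is in hand, I would invoke the abstract estimates of that reference in the following order. First, for the gaps between the algebraic eigenspaces in the $\mathbb V$‑norm, the abstract theory gives $\Theta\big(M(\lambda),M_h(\lambda)\big)\lesssim \|(\mathbb K-\mathbb K_h)\big|_{M(\lambda)}\|_{\mathcal L(M(\lambda),\mathbb V)}$, and by the Galerkin (quasi‑)optimality coming from the discrete inf‑sup condition this is bounded by $\inf_{\mathbf\Psi_h}\|\mathbf U-\mathbf\Psi_h\|_{\mathbb V}$ uniformly over the unit ball of $M(\lambda)$, i.e.\ by $\delta_h(\lambda)$; the adjoint statement is identical with $\delta_h^*(\lambda)$. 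Second, for the $\mathbb W$‑norm gaps one repeats the argument measuring the approximation error in $\|\cdot\|_{\mathbb W}$, which is exactly $\rho_h(\lambda)$ (resp.\ $\rho_h^*(\lambda)$); here the weak‑norm duality estimate of Lemma~\ref{Negative_norm_estimate_Lemma} is what guarantees $\rho_h(\lambda)\lesssim\eta_a^*(h)\delta_h(\lambda)\to0$, so the bound is genuinely of higher order. Third, for the eigenvalue error I would use the averaged‑eigenvalue estimate of \cite[Section 8]{BabuskaOsborn}: writing $\widehat\lambda_h=\frac1m\sum_{j}\lambda_{j,h}$ kills the first‑order terms, and what survives is controlled by $\|(\mathbb K-\mathbb K_h)\big|_{M(\lambda)}\|\cdot\|(\mathbb K_*-\mathbb K_h^*)\big|_{M^*(\lambda)}\|$, which by the two Galerkin estimates above is $\lesssim \delta_h(\lambda)\,\delta_h^*(\lambda)$. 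The identification of the $\mathbb T$‑coercivity / discrete inf‑sup constants as the mesh‑independent constants hidden in $\lesssim$ is what makes all of this uniform in $h$.

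The main obstacle — and the only place real work is needed beyond quoting \cite{BabuskaOsborn} — is the passage from the $\mathbb T$‑coercivity of the \emph{continuous} form $a(\cdot,\cdot)$ to a \emph{uniform} discrete inf‑sup condition on $\mathbb V_h$, because the operator $\mathbb T$ used in Theorem~\ref{Inf_Sup_Theorem}, namely $\mathbb T\mathbf\Psi=(\varphi,2\varphi-\psi)$, does not in general map $\mathbb V_h$ into itself. The standard remedy (as in \cite{BonnetChesnelHaddar,BonnetChesnelCiarlet,ChesnelCiarlet}) is to build a discrete operator $\mathbb T_h$ on $\mathbb V_h$ that approximates $\mathbb T$ well enough — e.g.\ replacing $2\varphi$ by $2I_h\varphi$ for a suitable interpolation/projection $I_h$ — and to show $\|\mathbb T-\mathbb T_h\|\to0$, so that $a(\cdot,\mathbb T_h\cdot)$ remains coercive on $\mathbb V_h$ for $h$ small; this is precisely the content of the remark following the discrete inf‑sup display in the excerpt, and I would treat it as granted here. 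Modulo that verification, the three asserted bounds follow by specializing \cite[Section 8]{BabuskaOsborn} to $\mathbb K$, $\mathbb K_*$ and their Galerkin approximations, exactly as in the treatment of other non‑selfadjoint problems.
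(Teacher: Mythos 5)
Your proposal matches the paper's treatment: Theorem~\ref{Error_Estimate_Theorem} is obtained there exactly as you describe, by combining the continuous and discrete $\mathbb T$-coercivity with compactness to get norm convergence of the projected solution operators, and then quoting the abstract spectral approximation theory for non-selfadjoint compact operators in \cite[Section 8]{BabuskaOsborn} together with Lemma~\ref{Negative_norm_estimate_Lemma} for the weak-norm and doubled eigenvalue rates. The only place you over-complicate is the ``main obstacle'': since $A$ and $n$ do not change sign here, the operator $\mathbb T\mathbf\Psi=(\varphi,2\varphi-\psi)$ is a purely algebraic combination of the two components and maps $\mathbb V_h$ into itself whenever both components use the same scalar finite element space (note $\varphi_h-(2\varphi_h-\psi_h)=-(\varphi_h-\psi_h)$ still has vanishing trace), so the discrete inf-sup condition follows verbatim from the continuous argument and no auxiliary operator $\mathbb T_h$ is needed --- which is why the paper simply asserts it ``by the same argument.''
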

\section{Multilevel correction method for transmission eigenvalue problem}\label{Multilevel_Correction_Method}
In this section, we introduce a type of multilevel correction method for the
transmission eigenvalue problem. This multilevel correction method
consists of solving some auxiliary linear problems
in a sequence of finite element spaces and an eigenvalue problem in a very low dimensional space.
For more discussion about the multilevel correction method,
please refer to \cite{LinXie_Steklov,LinXie,Xie_Nonconforming}.

In order to do multilevel correction scheme, we first generate a coarse mesh $\mathcal{T}_H$
with the mesh size $H$ and the coarse linear finite element space $\mathbb V_H$ is
defined on the mesh $\mathcal{T}_H$ \cite{BrennerScott,Ciarlet}. Then we define a sequence of triangulations
 $\{\mathcal{T}_{h_{\ell}}\}_{\ell}$
of $\Omega\subset \mathcal{R}^d$ determined as follows.
Suppose $\mathcal{T}_{h_1}$ (produced from $\mathcal{T}_H$ by
regular refinements) is given and let $\mathcal{T}_{h_{\ell}}$ be obtained
from $\mathcal{T}_{h_{\ell-1}}$ via regular refinement (produce $\beta^d$ subelements) such that
\begin{eqnarray}\label{Mesh_Size_Relation}
h_{\ell}\approx\frac{1}{\beta}h_{\ell-1},
\end{eqnarray}
where the integer $\beta>1$ denotes the refinement index \cite{BrennerScott,Shaidurov}.
It always equals $2$ in the first three numerical experiments
with quasi-uniform refinement.
Based on this sequence of meshes, we construct the corresponding linear finite element spaces such that
\begin{eqnarray}\label{FEM_Space_Series}
\mathbb V_{H}\subseteq \mathbb V_{h_1}\subset \mathbb V_{h_2}\subset\cdots\subset \mathbb V_{h_n}.
\end{eqnarray}

Before designing the multilevel correction method, we first introduce a type of
one correction step which can improve the accuracy of the given eigenpair approximation
by solving a linear problem and an eigenvalue problem in a very low dimensional space.
Assume that we have obtained the algebraic eigenpair approximations
$(\lambda_{j}^{h_{\ell}},\mathbf U_{j}^{h_{\ell}})\in\mathcal{C}\times \mathbb V_{h_{\ell}}$ and
the corresponding adjoint ones
$(\bar{\lambda}_{j}^{h_{\ell}},\mathbf U_{j}^{h_{\ell},*})\in\mathcal{C}\times \mathbb V_{h_{\ell}}$ for $j=i,\cdots,i+m-1$, where
eigenvalues $\{\lambda_{j}^{h_{\ell}}\}_{j=i}^{i+m-1}$ converge to the desired eigenvalue
$\lambda_i$ of (\ref{Problem_Weak}) with multiplicity $m$.
Now we introduce a correction step to improve the accuracy of the
current eigenpair approximations.
Let $\mathbb V_{h_{\ell+1}}\subset V$ be the conforming finite element space based on a finer
mesh $\mathcal{T}_{h_{\ell+1}}$ which is produced by refining $\mathcal{T}_{h_{\ell}}$.
We start from a conforming linear finite element space $\mathbb V_H$ on the
coarsest mesh $\mathcal{T}_H$ to design the following one correction step.

\begin{algorithm}\label{Correction_Step}
One Correction Step

\begin{enumerate}
\item  For $j=i,\cdots,i+m-1$ Do

Solve the following two boundary value problems:

\vspace{0.1cm}
$-$ Find $\widetilde{\mathbf U}_{j,h_{\ell+1}}\in\mathbb V_{h_{\ell+1}}$ such that
\begin{eqnarray}\label{aux_problem}
a(\widetilde{\mathbf U}_{j,h_{\ell+1}},\mathbf \Psi_{h_{\ell+1}})&=&b(\mathbf U_{j}^{h_{\ell}},\mathbf\Psi_{h_{\ell+1}}),\ \
\ \forall \mathbf \Psi_{h_{\ell+1}}\in \mathbb V_{h_{\ell+1}}.
\end{eqnarray}
$-$ Find  $\widetilde{\mathbf U}^*_{j,h_{\ell+1}}\in \mathbb V_{h_{\ell+1}}$ such that
\begin{eqnarray}\label{aux_problem_Adjoint}
a(\mathbf\Psi_{h_{\ell+1}},\widetilde{\mathbf U}^*_{j,h_{\ell+1}})&=&b(\mathbf \Psi_{h_{\ell+1}}, \mathbf U^{h_{\ell},*}_{j}),\ \
\ \forall \mathbf \Psi_{h_{\ell+1}}\in \mathbb V_{h_{\ell+1}}.
\end{eqnarray}
End Do
\item  Define two new finite element { spaces}
\begin{eqnarray*}
\mathbb V_{H,h_{\ell+1}}=\mathbb V_H\oplus{\rm span}
\{\widetilde{\mathbf U}_{i,h_{\ell+1}},\cdots,\widetilde{\mathbf U}_{i+m-1,h_{\ell+1}}\}
\end{eqnarray*}
and
\begin{eqnarray*}
\mathbb V^*_{H,h_{\ell+1}}=\mathbb V_H\oplus{\rm span}
\{\widetilde{\mathbf U}^*_{i,h_{\ell+1}},\cdots,\widetilde{\mathbf U}^*_{i+m-1,h_{\ell+1}}\}.
\end{eqnarray*}

Solve the following two eigenvalue problems:

\vspace{0.2cm}
$-$ Find $(\lambda_{j}^{h_{\ell+1}},\mathbf U_{j}^{h_{\ell+1}})\in\mathcal{C}\times \mathbb V_{H,h_{\ell+1}}$ such
that $\|\mathbf U_{j}^{h_{\ell+1}}\|_{\mathbb W}=1$ and
\begin{eqnarray}\label{Eigen_Augment_Problem}
\hskip-1cm a(\mathbf U_{j}^{h_{\ell+1}},\mathbf\Psi_{H,h_{\ell+1}})=\lambda_{j}^{h_{\ell+1}}
 b(\mathbf U_{j}^{h_{\ell+1}},\mathbf\Psi_{H,h_{\ell+1}}),\ \
\forall \mathbf\Psi_{H,h_{\ell+1}}\in \mathbb V^*_{H,h_{\ell+1}}.
\end{eqnarray}
$-$ Find $({\lambda}_{j}^{h_{\ell+1}},\mathbf U_{j}^{h_{\ell+1},*})\in\mathcal{C}\times \mathbb V^*_{H,h_{\ell+1}}$ such
that $\|\mathbf U_{j}^{h_{\ell+1},*}\|_{\mathbb W}=1$ and
\begin{eqnarray}\label{Eigen_Augment_Problem_Adjoint}
\hskip-1cm a(\mathbf\Psi_{H,h_{\ell+1}},\mathbf U_{j}^{h_{\ell+1},*})={\lambda}_{j}^{h_{\ell+1}}
b(\mathbf\Psi_{H,h_{\ell+1}},\mathbf U_{j}^{h_{\ell+1},*}),\ \
\forall \mathbf\Psi_{H,h_{\ell+1}}\in \mathbb V_{H,h_{\ell+1}}.
\end{eqnarray}

\item Choose $2q$ eigenpairs $\{\lambda_{j}^{h_{\ell+1}}, \mathbf U_{j}^{h_{\ell+1}}\}_{j=i}^{i+q-1}$ and
$\{\lambda_{j}^{h_{\ell+1}}, \mathbf U_{j}^{h_{\ell+1},*}\}_{j=i}^{i+q-1}$ to define  
 two new geometric eigenspaces
\begin{eqnarray*}
Q_{h_{\ell+1}}(\lambda_i)={\rm span}\big\{\mathbf U_{i}^{h_{\ell+1}},\cdots, \mathbf U_{i+q-1}^{h_{\ell+1}}\big\}
\end{eqnarray*}
and
\begin{eqnarray*}
Q^*_{h_{\ell+1}}(\lambda_i)={\rm span}\big\{\mathbf U_{i}^{h_{\ell+1},*},\cdots, \mathbf U_{i+q-1}^{h_{\ell+1},*}\big\}.
\end{eqnarray*}
Based on these two geometric eigenspaces $Q_{h_{\ell+1}}(\lambda_i)$ and  $Q^*_{h_{\ell+1}}(\lambda_i)$,
compute two algebraic eigenspaces
\begin{eqnarray}
M_{h_{\ell+1}}(\lambda_i)={\rm span}\big\{\mathbf U_{i}^{h_{\ell+1}},\cdots, \mathbf U_{i+m-1}^{h_{\ell+1}}\big\}
\end{eqnarray}
and
\begin{eqnarray}
M^*_{h_{\ell+1}}(\lambda_i)={\rm span}\big\{\mathbf U_{i}^{h_{\ell+1},*},\cdots, \mathbf U_{i+m-1}^{h_{\ell+1},*}\big\}.
\end{eqnarray}
\end{enumerate}
In order to simplify the notations and summarize the above three steps, we define
\begin{eqnarray*}
&&\big(\{\lambda_{j}^{h_{\ell+1}}\}_{j=i}^{i+m-1},M_{h_{\ell+1}}(\lambda_i),M^*_{h_{\ell+1}}(\lambda_i)\big)=\nonumber\\
&&\ \ \ \ \quad\quad {\it
Correction}\big(\mathbb V_H,\{\lambda_{j}^{h_{\ell}}\}_{j=i}^{i+m-1},M_{h_{\ell}}(\lambda_i),M^*_{h_{\ell}}(\lambda_i),\mathbb V_{h_{\ell+1}}\big).
\end{eqnarray*}
\end{algorithm}
\begin{remark}
Since in Step 1 of Algorithm \ref{Correction_Step}, the solving processes for the boundary value problems
are independent of each other for different $j$, we can implement them in parallel.
\end{remark}

\begin{theorem}\label{Error_Estimate_One_Correction_Theorem}
Assume the given eigenpairs $\big(\{\lambda_{j}^{h_{\ell}}\}_{j=i}^{i+m-1}, M_{h_{\ell}}(\lambda_i),M^*_{h_{\ell}}(\lambda_i)\big)$
in  Algorithm \ref{Correction_Step} have the following error estimates
\begin{eqnarray}
\Theta(M(\lambda_i),M_{h_{\ell}}(\lambda_i)) &\lesssim& \varepsilon_{h_{\ell}}(\lambda_i),\\
\Theta(M^*(\lambda_i),M^*_{h_{\ell}}(\lambda_i)) &\lesssim& \varepsilon^*_{h_{\ell}}(\lambda_i),\label{Error_u_h_1}\\
\Phi(M(\lambda_i),M_{h_{\ell}}(\lambda_i)) &\lesssim&\eta_a^*(H)\varepsilon_{h_{\ell}}(\lambda_i),\\
\Phi(M^*(\lambda_i),M^*_{h_{\ell}}(\lambda_i))&\lesssim& \eta_a(H)\varepsilon^*_{h_{\ell}}(\lambda_i). \label{Error_u_h_1_nagative}
\end{eqnarray}
Then after one correction step, the resultant eigenpair approximations\\
$(\{\lambda_{j}^{h_{\ell+1}}\}_{j=i}^{i+m-1},M_{h_{\ell+1}}(\lambda_i),M^*_{h_{\ell+1}}(\lambda_i))$
have the following error estimates
\begin{eqnarray}
\Theta(M(\lambda_i),M_{h_{\ell+1}}(\lambda_i)) &\lesssim& \varepsilon_{h_{\ell+1}}(\lambda_i),\label{Estimate_u_u_h_2}\\
\Theta(M^*(\lambda_i),M^*_{h_{\ell+1}}(\lambda_i)) &\lesssim& \varepsilon^*_{h_{\ell+1}}(\lambda_i),\label{Estimate_u_u_h_2_adjoint}\\
\Phi(M(\lambda_i),M_{h_{\ell+1}}(\lambda_i)) &\lesssim&
\eta_a^*(H) \varepsilon_{h_{\ell+1}}(\lambda_i),\label{Estimate_u_h_2_Nagative}\\
\Phi(M^*(\lambda_i),M^*_{h_{\ell+1}}(\lambda_i)) &\lesssim&
\eta_a(H) \varepsilon^*_{h_{\ell+1}}(\lambda_i),\label{Estimate_u_h_2_Nagative_Adjoint}
\end{eqnarray}
where
\begin{eqnarray*}
\varepsilon_{h_{\ell+1}}(\lambda_i)&:=&\eta_a^*(H)\varepsilon_{h_{\ell}}(\lambda_i)+\delta_{h_{\ell+1}}(\lambda_i), \\
\varepsilon^*_{h_{\ell+1}}(\lambda_i)&:=&\eta_a(H)\varepsilon^*_{h_{\ell}}(\lambda_i)+\delta^*_{h_{\ell+1}}(\lambda_i).
\end{eqnarray*}
\end{theorem}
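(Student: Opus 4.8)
The plan is to reduce the statement to the abstract finite element eigenvalue theory behind Lemma~\ref{Negative_norm_estimate_Lemma} and Theorem~\ref{Error_Estimate_Theorem}, applied to the enriched conforming subspaces $\mathbb V_{H,h_{\ell+1}}$ and $\mathbb V^*_{H,h_{\ell+1}}$ (serving as trial and test spaces in the Petrov--Galerkin eigenproblems (\ref{Eigen_Augment_Problem})--(\ref{Eigen_Augment_Problem_Adjoint})) in place of a quasi-uniform finite element space. The first observation is that the auxiliary solves (\ref{aux_problem}) and (\ref{aux_problem_Adjoint}) are Galerkin projections: from the definitions of $\mathbb K$, $\mathbb K_*$, $P_{h_{\ell+1}}$ and $P^*_{h_{\ell+1}}$ one checks $\widetilde{\mathbf U}_{j,h_{\ell+1}}=P_{h_{\ell+1}}\mathbb K\mathbf U^{h_\ell}_{j}$ and $\widetilde{\mathbf U}^*_{j,h_{\ell+1}}=P^*_{h_{\ell+1}}\mathbb K_*\mathbf U^{h_\ell,*}_{j}$. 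Two elementary ingredients are used repeatedly: the smoothing estimate $\|\mathbb K\mathbf\Phi\|_{\mathbb V}\lesssim\|\mathbf\Phi\|_{\mathbb W}$ and its adjoint version, which follow from the continuous inf-sup (\ref{Inf-sup_left}) and the $\mathbb W$-continuity of $b(\cdot,\cdot)$, and the uniform $\mathbb V$-boundedness of $P_{h_{\ell+1}}$ and $P^*_{h_{\ell+1}}$.

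The core step is the approximation estimate
\[
\widehat\Theta\big(M(\lambda_i),\mathbb V_{H,h_{\ell+1}}\big)\lesssim\varepsilon_{h_{\ell+1}}(\lambda_i),\qquad
\widehat\Theta\big(M^*(\lambda_i),\mathbb V^*_{H,h_{\ell+1}}\big)\lesssim\varepsilon^*_{h_{\ell+1}}(\lambda_i),
\]
which I would prove for the primal space by induction on the order $p$ of a generalized eigenvector $\mathbf U^{p}\in M(\lambda_i)$ (the adjoint case is identical, with $\mathbb K_*$, $P^*_{h_{\ell+1}}$ and $\eta_a$ in place of $\mathbb K$, $P_{h_{\ell+1}}$ and $\eta_a^*$). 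Rewriting (\ref{Recurisve_Law}) in operator form and using the non-degeneracy of $a(\cdot,\cdot)$ gives $\mathbf U^{p}=\lambda_i\mathbb K\mathbf U^{p}+\lambda_i\mathbf U^{p-1}$ with $\mathbf U^{p-1}\in M(\lambda_i)$ of order $p-1$ (for $p=1$ the last term is absent, which is the base case). Choose $\mathbf U_{h_\ell}\in M_{h_\ell}(\lambda_i)$ realizing, up to the assumed constants, the gaps $\Theta(M(\lambda_i),M_{h_\ell}(\lambda_i))$ in $\|\cdot\|_{\mathbb V}$ and $\Phi(M(\lambda_i),M_{h_\ell}(\lambda_i))$ in $\|\cdot\|_{\mathbb W}$; since $\mathbf U_{h_\ell}$ is a linear combination of the $\mathbf U^{h_\ell}_{j}$, the vector $\lambda_i P_{h_{\ell+1}}\mathbb K\mathbf U_{h_\ell}$ is the same combination of the $\widetilde{\mathbf U}_{j,h_{\ell+1}}$ and hence belongs to $\mathbb V_{H,h_{\ell+1}}$. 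In the splitting
\[
\lambda_i\mathbb K\mathbf U^{p}-\lambda_i P_{h_{\ell+1}}\mathbb K\mathbf U_{h_\ell}
=\lambda_i\big(\mathbb K\mathbf U^{p}-P_{h_{\ell+1}}\mathbb K\mathbf U^{p}\big)+\lambda_i P_{h_{\ell+1}}\mathbb K\big(\mathbf U^{p}-\mathbf U_{h_\ell}\big),
\]
the first term is the Galerkin error in $\mathbb V_{h_{\ell+1}}$ of $\mathbb K\mathbf U^{p}\in M(\lambda_i)$, hence $\lesssim\delta_{h_{\ell+1}}(\lambda_i)$ by the discrete inf-sup and C\'ea's estimate, while the second is $\lesssim\|\mathbb K(\mathbf U^{p}-\mathbf U_{h_\ell})\|_{\mathbb V}\lesssim\|\mathbf U^{p}-\mathbf U_{h_\ell}\|_{\mathbb W}\lesssim\eta_a^*(H)\varepsilon_{h_\ell}(\lambda_i)$ by the smoothing estimate and the hypotheses (\ref{Error_u_h_1})--(\ref{Error_u_h_1_nagative}). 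Adding the induction hypothesis applied to $\lambda_i\mathbf U^{p-1}$ and recalling $\varepsilon_{h_{\ell+1}}(\lambda_i)=\eta_a^*(H)\varepsilon_{h_\ell}(\lambda_i)+\delta_{h_{\ell+1}}(\lambda_i)$ yields $\inf_{\mathbf v\in\mathbb V_{H,h_{\ell+1}}}\|\mathbf U^{p}-\mathbf v\|_{\mathbb V}\lesssim\varepsilon_{h_{\ell+1}}(\lambda_i)$, and taking the supremum over (sums of) unit generalized eigenvectors of $M(\lambda_i)$ proves the claim.

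It then remains to convert these approximation bounds into eigenpair errors. Since $\mathbb V_H\subseteq\mathbb V_{H,h_{\ell+1}}$ and $\mathbb V_H\subseteq\mathbb V^*_{H,h_{\ell+1}}$, the discrete inf-sup conditions hold on the enriched spaces for $H$ small enough, so Theorem~\ref{Error_Estimate_Theorem} and Lemma~\ref{Negative_norm_estimate_Lemma} apply with $\mathbb V_h$ replaced by the pair $(\mathbb V_{H,h_{\ell+1}},\mathbb V^*_{H,h_{\ell+1}})$; moreover the dual-approximation quantities of these spaces are bounded by $\eta_a^*(H)$ and $\eta_a(H)$ respectively, because $\mathbb K_*\mathbf\Phi$ and $\mathbb K\mathbf\Phi$ are already approximated out of $\mathbb V_H$. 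Hence Theorem~\ref{Error_Estimate_Theorem} gives $\Theta(M(\lambda_i),M_{h_{\ell+1}}(\lambda_i))\lesssim\widehat\Theta(M(\lambda_i),\mathbb V_{H,h_{\ell+1}})\lesssim\varepsilon_{h_{\ell+1}}(\lambda_i)$, i.e.\ (\ref{Estimate_u_u_h_2}), and analogously (\ref{Estimate_u_u_h_2_adjoint}); combining the weak-norm part of Theorem~\ref{Error_Estimate_Theorem} with Lemma~\ref{Negative_norm_estimate_Lemma} and the above $\eta$-bounds yields $\Phi(M(\lambda_i),M_{h_{\ell+1}}(\lambda_i))\lesssim\eta_a^*(H)\varepsilon_{h_{\ell+1}}(\lambda_i)$ and its adjoint counterpart, i.e.\ (\ref{Estimate_u_h_2_Nagative})--(\ref{Estimate_u_h_2_Nagative_Adjoint}). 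All hidden constants depend only on the continuous and discrete inf-sup constants, $\|\mathbb K\|$, $\|\mathbb K_*\|$, the projection norms and the abstract-theory constants, hence are independent of $\ell$ once $H$ and $h_{\ell+1}$ are small enough.

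The step I expect to be the main obstacle is the inductive passage through generalized eigenvectors of order larger than one: one must run the recursion (\ref{Recurisve_Law}) without letting the coupling of different orders accumulate level-dependent constants, and must cover arbitrary elements of $M(\lambda_i)$ (sums of Jordan chains) rather than pure generalized eigenvectors. A secondary technical point is verifying rigorously that the discrete inf-sup condition --- and hence the full applicability of Lemma~\ref{Negative_norm_estimate_Lemma} and Theorem~\ref{Error_Estimate_Theorem} --- genuinely transfers to the non-standard augmented spaces $\mathbb V_{H,h_{\ell+1}}$, $\mathbb V^*_{H,h_{\ell+1}}$ and to the Petrov--Galerkin form of (\ref{Eigen_Augment_Problem})--(\ref{Eigen_Augment_Problem_Adjoint}).
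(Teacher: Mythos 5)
Your proposal is correct and follows essentially the same route as the paper: establish the approximation property $\widehat\Theta(M(\lambda_i),\mathbb V_{H,h_{\ell+1}})\lesssim\eta_a^*(H)\varepsilon_{h_\ell}(\lambda_i)+\delta_{h_{\ell+1}}(\lambda_i)$ by splitting the error into a term controlled by the $\mathbb W$-norm hypothesis via the smoothing/duality of $\mathbb K$ and a fine-space Galerkin term, then invoke the Babu\v{s}ka--Osborn theory on the augmented spaces together with $\widetilde\eta_a^*(H)\leq\eta_a^*(H)$ (which holds since $\mathbb V_H\subseteq\mathbb V_{H,h_{\ell+1}}$). The only difference is bookkeeping of the Jordan structure --- the paper encodes (\ref{Recurisve_Law}) in a nonsingular matrix $\mathcal P$ and shows each computed $\widetilde{\mathbf U}_{j,h_{\ell+1}}$ lies close to $M(\lambda_i)$, whereas you induct on the order of the generalized eigenvectors; since the ascent $\alpha$ is fixed, your anticipated worry about constant accumulation is not an issue.
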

\begin{proof}
From (\ref{Recurisve_Law}),  there exist the basis functions $\big\{\mathbf U_j\big\}_{j=i}^{i+m-1}$ of $M(\lambda_i)$ 
such that 
\begin{eqnarray}
a(\mathbf U_j,\mathbf \Psi)&=&b\left(\sum_{k=i}^{i+m-1}p_{jk}(\lambda_i)\mathbf U_k,\mathbf \Psi\right),\ \ \ \forall \mathbf \Psi\in \mathbb V,
\end{eqnarray}
where $p_{jk}$ denotes a polynomial of degree no more than $\alpha$ for $k=i,\cdots,j$ with $p_{jj}(\lambda_i)=\lambda_i$ 
and $p_{jk}(\lambda_i)=0$ for $j<k\leq i+m-1$. 
We can define a matrix $\mathcal P:=(p_{j+1-i,k+1-i})_{i\leq j,k\leq i+m-1}\in \mathcal C^{m\times }$ such that    
\begin{eqnarray}
a(\bar{\mathbf U},\mathbf \Psi)=b(\mathcal P\bar{\mathbf U},\mathbf \Psi),\ \ \ \ \forall \mathbf\Psi\in \mathbb V,
\end{eqnarray}
where $\bar{\mathbf U}:=(\mathbf U_i,\cdots,\mathbf U_{i+m-1})^T$. 
It is easy to know  that the matrix $\mathcal P$ is nonsingular providing $\lambda_i\neq 0$.


For each $\widetilde{\mathbf U}_{j,h_{\ell+1}}$, from the definitions of $\Theta(M(\lambda_i),M_{h_{\ell}}(\lambda_i))$ and $\Phi(M(\lambda_i),M_{h_{\ell}}(\lambda_i))$,
there exist a vector $\mathcal R_j := (c_1,\cdots,c_m)^T\in \mathcal C^{m\times 1}$ such that
\begin{eqnarray}
\|\mathbf U_{j}^{h_{\ell}}-\mathcal R_j^T\bar{\mathbf U}\|_{\mathbb V} &\lesssim & \varepsilon_{h_{\ell}}(\lambda_i),
\ \ \quad \ \ \ \ \ \ \ {\rm for}\ j=i,\cdots,i+m-1,\\
\|\mathbf U_{j}^{h_{\ell}}-\mathcal R_j^T\bar{\mathbf U}\|_{\mathbb W} &\lesssim& \eta_a^*(H)\varepsilon_{h_{\ell}}(\lambda_i),
\ \ \ \ {\rm for}\ j=i,\cdots,i+m-1.
\end{eqnarray}
For any $\mathbf\Psi_{h_{\ell+1}}\in \mathbb V_{h_{\ell+1}}$, we have
\begin{eqnarray}\label{Error_Estimate_1}
&&{|}a(\widetilde{\mathbf U}_{j,h_{\ell+1}}-P_{h_{\ell+1}}\mathcal R_j\mathcal P^{-1}\bar{\mathbf U},\mathbf\Psi_{h_{\ell+1}}){|}
={|}a(\widetilde{\mathbf U}_{j,h_{\ell+1}}-\mathcal R_j\mathcal P^{-1}\bar{\mathbf U},\mathbf\Psi_{h_{\ell+1}}){|}\nonumber\\
&=&b(\mathbf U_j^{h_{\ell}}-\mathcal R_j^T\mathcal{P}^{-1}\mathcal P\bar{\mathbf U},\mathbf \Psi_{h_{\ell+1}})
= {|}b(\mathbf U_{j}^{h_{\ell}}-\mathcal R_j^T\bar{\mathbf U},\mathbf\Psi_{h_{\ell+1}}){|}\nonumber\\
&\lesssim& \eta_a^*(H)\varepsilon_{h_{\ell}}(\lambda_i)\|\mathbf\Psi_{h_{\ell+1}}\|_{\mathbb V}.
\end{eqnarray}
From (\ref{Inf-sup_left}) and  (\ref{Error_Estimate_1}), the following estimate holds
\begin{eqnarray}
\|\widetilde{\mathbf U}_{j,h_{\ell+1}}-P_{h_{\ell+1}}\mathcal R_j^T\mathcal P^{-1}\bar{\mathbf U}\|_{\mathbb V} &\lesssim&
 \eta_a^*(H)\varepsilon_{h_{\ell}}(\lambda_i),\nonumber\\
 && \ \ {\rm for}\ j=i,\cdots,i+m-1.
\end{eqnarray} 
Combining with the error estimate
\begin{eqnarray}
\|\mathcal R_j^T\mathcal P^{-1}\bar{\mathbf U}-P_{h_{\ell+1}}\mathcal R_j^T\mathcal P^{-1}\bar{\mathbf U}\|_{\mathbb V}
&\lesssim& \delta_{h_{\ell+1}}(\lambda_i),\nonumber\\
&&\ \ {\rm for}\ j=i,\cdots,i+m-1,
\end{eqnarray}
we have
\begin{eqnarray}\label{Error_Estimates_Tilde_u_J}
\|\widetilde{\mathbf U}_{j,h_{\ell+1}}-\mathcal R_j^T\mathcal P^{-1}\bar{\mathbf U}\|_{\mathbb V} &\lesssim&
\eta_a^*(H)\varepsilon_{h_{\ell}}(\lambda_i)+\delta_{h_{\ell+1}}(\lambda_i),\nonumber\\
&&\ \ \ \ \  \ \ {\rm for}\ j=i,\cdots,i+m-1.
\end{eqnarray}
After Step 3, from the definition of $\mathbb V_{H,h_{\ell+1}}$ and (\ref{Error_Estimates_Tilde_u_J}), we derive
\begin{eqnarray}\label{Definition_Varepsilon_k+1}
&&\sup_{\mathbf U\in M(\lambda_i),\|\mathbf U\|_{\mathbb V}=1}
\inf_{\mathbf\Psi_{H,h_{\ell+1}}\in \mathbb V_{H,h_{\ell+1}}}\|\mathbf U-\mathbf\Psi_{H,h_{\ell+1}}\|_{\mathbb V}\nonumber\\
&\leq& \sup_{\mathbf U\in M(\lambda_i),\|\mathbf U\|_{\mathbb V}=1}
\inf_{\mathbf\Psi_{h_{\ell+1}}\in \mathbb V_{h_{\ell+1}}}\|\mathbf U-\mathbf\Psi_{h_{\ell+1}}\|_{\mathbb V}\nonumber\\
&\lesssim& \sup_{\mathbf\Psi_{h_{\ell+1}}\in \mathbb V_{h_{\ell+1}}, \|\mathbf\Psi_{h_{\ell+1}}\|_{\mathbb V}=1}
\inf_{\mathbf U\in M(\lambda_i)}\|\mathbf\Psi_{h_{\ell+1}}-\mathbf U\|_{\mathbb V}\nonumber\\
&\lesssim& \max_{j=i,\cdots,i+m-1}\|\widetilde{\mathbf U}_{j,h_{\ell+1}}-\mathcal R_j^T\mathcal P^{-1}\bar{\mathbf U}\|_{\mathbb V}\nonumber\\
&\lesssim& \eta_a^*(H)\varepsilon_{h_{\ell}}(\lambda_i)+\delta_{h_{\ell+1}}(\lambda_i),
\end{eqnarray}
where $ \mathbb V_{h_{\ell+1}}:={\rm span}\{\widetilde{\mathbf U}_i^{h_{\ell+1}},\cdots,\widetilde{\mathbf U}_{i+m-1}^{h_{\ell+1}}\}$. 

Similarly, we have 
\begin{eqnarray}\label{Definition_Varepsilon_k+1_Adjoint}
&&\sup_{\mathbf U_*\in M^*(\lambda_i),\|\mathbf U_*\|_{\mathbb V}=1}\inf_{\mathbf \Psi_{H,h_{\ell+1}}\in
 \mathbb V^*_{H,h_{\ell+1}}}\|\mathbf U^*-\mathbf \Psi_{H,h_{\ell+1}}\|_{\mathbb V}\nonumber\\
 &\lesssim& \eta_a(H)\varepsilon^*_{h_{\ell}}(\lambda_i)+\delta^*_{h_{\ell+1}}(\lambda_i).
\end{eqnarray}
Then from the error estimate results stated in Theorem \ref{Error_Estimate_Theorem}
for the eigenvalue problem (see, e.g., \cite[Section 8]{BabuskaOsborn})
and (\ref{Definition_Varepsilon_k+1})-(\ref{Definition_Varepsilon_k+1_Adjoint}),
the following error estimates hold
\begin{eqnarray}
\Theta(M(\lambda_i),M_{h_{\ell+1}}(\lambda_i))
&\lesssim& \eta_a^*(H)\varepsilon_{h_{\ell}}(\lambda_i)+\delta_{h_{\ell+1}}(\lambda_i),\\
\Theta(M^*(\lambda_i),M^*_{h_{\ell+1}}(\lambda_i))
&\lesssim& \eta_a(H)\varepsilon^*_{h_{\ell}}(\lambda_i)+\delta^*_{h_{\ell+1}}(\lambda_i).
\end{eqnarray}
These are the desired estimates (\ref{Estimate_u_u_h_2}) and (\ref{Estimate_u_u_h_2_adjoint}).
Furthermore,
\begin{eqnarray}
&&\Phi(M(\lambda_i),M_{h_{\ell+1}}(\lambda_i))\nonumber\\
&\lesssim &
\widetilde{\eta}_a^*(H)\sup_{\mathbf U\in M(\lambda_i),\|\mathbf U\|_{\mathbb V}=1}
\inf_{\mathbf\Psi_{H,h_{\ell+1}}\in \mathbb V_{H,h_{\ell+1}}}\|\mathbf U-\mathbf\Psi_{H,h_{\ell+1}}\|_{\mathbb V}\nonumber\\
&\leq & \eta_a^*(H)\varepsilon_{h_{\ell+1}}(\lambda_i),
\end{eqnarray}
where
\begin{eqnarray}
\widetilde{\eta}_a^*(H):=\sup_{\mathbf f\in \mathbb V,\|\mathbf f\|_{\mathbb W}=1}
\inf_{\mathbf\Psi_{H,h_{\ell+1}}\in \mathbb V_{H,h_{\ell+1}}}
\|\mathbb K_*\mathbf f-\mathbf\Psi_{H,h_{\ell+1}}\|_{\mathbb V}\leq \eta_a^*(H).
\end{eqnarray}
Then we obtain (\ref{Estimate_u_h_2_Nagative}). A similar argument leads to (\ref{Estimate_u_h_2_Nagative_Adjoint}).
\end{proof}


Now, based on the {\it One Correction Step} defined in Algorithm \ref{Correction_Step},
we introduce a multilevel correction scheme for the transmission eigenvalue problem.

\begin{algorithm}\label{Multi_Correction}
Multilevel Correction Scheme
\begin{enumerate}
\item Construct a coarse conforming finite element space $\mathbb V_{h_1}$ on $\mathcal{T}_{h_1}$
such that $\mathbb V_H\subset \mathbb V_{h_1}$
and solve the following two eigenvalue problems:

\vspace{0.1cm}
$-$ Find $(\lambda^{h_1},\mathbf U^{h_1})\in \mathcal{C}\times \mathbb V_{h_1}$ such that
$\|\mathbf U^{h_1}\|_{\mathbb W}=1$ and
\begin{eqnarray}\label{Initial_Eigen_Problem}
a(\mathbf U^{h_1},\mathbf\Psi_{h_1})&=&\lambda^{h_1}b(\mathbf U^{h_1},\mathbf\Psi_{h_1}),\ \ \ \
\forall \mathbf\Psi_{h_1}\in \mathbb V_{h_1}.
\end{eqnarray}
$-$ Find $(\lambda^{h_1},\mathbf U^{h_1,*})\in \mathcal{C}\times \mathbb V_{h_1}$ such that
$\|\mathbf U^{h_1,*}\|_{\mathbb W}=1$ and
\begin{eqnarray}\label{Initial_Eigen_Problem_Adjoint}
a(\mathbf\Psi_{h_1}, \mathbf U^{h_1,*})&=&\lambda^{h_1}b(\mathbf\Psi_{h_1}, \mathbf U^{h_1,*}),
\ \ \ \ \forall \mathbf\Psi_{h_1}\in \mathbb V_{h_1}.
\end{eqnarray}

Choose $2q$ eigenpairs $\{\lambda_{j}^{h_1},\mathbf U_{j}^{h_1}\}_{j=i}^{i+q-1}$ and
$\{\lambda_{j}^{h_1},\mathbf U_{j}^{h_1,*}\}_{j=i}^{i+q-1}$
which approximate the desired
eigenvalue $\lambda_i$ and its geometric eigenspaces of the eigenvalue problem
 (\ref{Initial_Eigen_Problem})
and its adjoint one (\ref{Initial_Eigen_Problem_Adjoint}).
Based on these two geometric eigenspace, we compute the
corresponding algebraic eigenspaces $M_{h_1}(\lambda_i):={\rm space}
\big\{\mathbf U_{i}^{h_1},\cdots,\mathbf U_{i+m-1}^{h_1}\big\}$
and $M^*_{h_1}(\lambda_i):={\rm space}\big\{\mathbf U_{i}^{h_1,*},\cdots,
 \mathbf U_{i+m-1}^{h_1,*}\big\}$.
Then do the following correction steps.

\item Construct a series of finer finite element
spaces $\mathbb V_{h_2},\cdots,\mathbb V_{h_n}$ on the sequence of
 nested meshes $\mathcal{T}_{h_2},\cdots,\mathcal{T}_{h_n}$
(c.f. \cite{BrennerScott,Ciarlet}).

\item Do $\ell=1,\cdots,n-1$\\
Obtain new eigenpair approximations
$(\{\lambda_{j}^{h_{\ell+1}}\}_{j=i}^{i+m-1},M_{h_{\ell+1}}(\lambda_i),M^*_{h_{\ell+1}}(\lambda_i))$
by Algorithm \ref{Correction_Step}
\begin{eqnarray*}
&&\big(\{\lambda_{j}^{h_{\ell+1}}\}_{j=i}^{i+m-1},M_{h_{\ell+1}}(\lambda_i),M^*_{h_{\ell+1}}(\lambda_i)\big)=\nonumber\\
&&\ \ \ \ \quad\quad {\it
Correction}\big(\mathbb V_H,\{\lambda_{j}^{h_{\ell}}\}_{j=i}^{i+m-1},M_{h_{\ell}}(\lambda_i),M^*_{h_{\ell}}(\lambda_i),\mathbb V_{h_{\ell+1}}\big).
\end{eqnarray*}
End Do
\end{enumerate}
Finally, we obtain eigenpair approximations
$\big(\{\lambda_{j}^{h_n}\}_{j=i}^{i+m-1},M_{h_n}(\lambda_i),M^*_{h_n}(\lambda_i)\big)$.
\end{algorithm}
\begin{theorem}\label{Error_Estimate_Final_Theorem}
After implementing Algorithm \ref{Multi_Correction}, the resultant
eigenpair approximations $(\{\lambda_{j}^{h_n}\}_{j=i}^{i+m-1},M_{h_n}(\lambda_i),M^*_{h_n}(\lambda_i))$
 have the following error estimates
\begin{eqnarray}
\Theta(M(\lambda_i),M_{h_n}(\lambda_i)) &\lesssim& \varepsilon_{h_n}(\lambda_i),\label{Multi_Correction_Err_fun}\\
\Phi(M(\lambda_i),M_{h_n}(\lambda_i)) &\lesssim&\eta_a^*(H) \varepsilon_{h_n}(\lambda_i),\label{Multi_Correction_Err_fun_Weak}\\
\Theta(M^*(\lambda_i),M^*_{h_n}(\lambda_i)) &\lesssim& \varepsilon^*_{h_n}(\lambda_i),\label{Multi_Correction_Err_fun_Adjoint}\\
\Phi(M^*(\lambda_i),M^*_{h_n}(\lambda_i)) &\lesssim&\eta_a(H) \varepsilon^*_{h_n}(\lambda_i),\label{Multi_Correction_Err_fun_Weak_Adjoint}\\
|\widehat{\lambda}_{i}^{h_n}-\lambda_i|&\lesssim&\varepsilon_{h_n}(\lambda_i)\varepsilon^*_{h_n}(\lambda_i),
\label{Multi_Correction_Err_eigen}
\end{eqnarray}
where $\widehat{\lambda}_{i}^{h_n}=\frac{1}{m}\sum_{j=i}^{i+m-1}\lambda_{j}^{h_n}$,
$\varepsilon_{h_n}(\lambda_i)=\sum_{k=1}^{n}\eta_a^*(H)^{n-k}\delta_{h_{\ell}}(\lambda_i)$ and\\
$\varepsilon^*_{h_n}(\lambda_i)=\sum_{k=1}^{n}\eta_a(H)^{n-k}\delta^*_{h_{\ell}}(\lambda_i)$.
\end{theorem}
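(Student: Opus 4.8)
The plan is to prove Theorem \ref{Error_Estimate_Final_Theorem} by induction on the level index $\ell$, using Theorem \ref{Error_Estimate_One_Correction_Theorem} as the inductive step and Theorem \ref{Error_Estimate_Theorem} together with Lemma \ref{Negative_norm_estimate_Lemma} to seed the induction. For the base case I would observe that the initial solves (\ref{Initial_Eigen_Problem})--(\ref{Initial_Eigen_Problem_Adjoint}) on $\mathbb V_{h_1}$ are exactly the standard finite element discretization of (\ref{Problem_Weak}) and its adjoint, so Theorem \ref{Error_Estimate_Theorem} gives $\Theta(M(\lambda_i),M_{h_1}(\lambda_i))\lesssim\delta_{h_1}(\lambda_i)$, $\Theta(M^*(\lambda_i),M^*_{h_1}(\lambda_i))\lesssim\delta^*_{h_1}(\lambda_i)$, and $\Phi(M(\lambda_i),M_{h_1}(\lambda_i))\lesssim\rho_{h_1}(\lambda_i)$, $\Phi(M^*(\lambda_i),M^*_{h_1}(\lambda_i))\lesssim\rho^*_{h_1}(\lambda_i)$. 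By Lemma \ref{Negative_norm_estimate_Lemma}, $\rho_{h_1}(\lambda_i)\lesssim\eta_a^*(h_1)\delta_{h_1}(\lambda_i)$, and since $\mathbb V_H\subseteq\mathbb V_{h_1}$ one has $\eta_a^*(h_1)\le\eta_a^*(H)$, so $\Phi(M(\lambda_i),M_{h_1}(\lambda_i))\lesssim\eta_a^*(H)\delta_{h_1}(\lambda_i)$ and likewise for the adjoint. Hence the hypotheses of Theorem \ref{Error_Estimate_One_Correction_Theorem} hold at level $\ell=1$ with $\varepsilon_{h_1}(\lambda_i)=\delta_{h_1}(\lambda_i)$ and $\varepsilon^*_{h_1}(\lambda_i)=\delta^*_{h_1}(\lambda_i)$, matching the stated formulas for $n=1$.

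For the inductive step, suppose that after the $\ell$-th correction the approximations satisfy the four hypothesis estimates of Theorem \ref{Error_Estimate_One_Correction_Theorem} with $\varepsilon_{h_\ell}(\lambda_i)=\sum_{k=1}^{\ell}\eta_a^*(H)^{\ell-k}\delta_{h_k}(\lambda_i)$ and the analogous $\varepsilon^*_{h_\ell}(\lambda_i)$. Applying Theorem \ref{Error_Estimate_One_Correction_Theorem} with the finer space $\mathbb V_{h_{\ell+1}}$ reproduces the same four estimates at level $\ell+1$ with $\varepsilon_{h_{\ell+1}}(\lambda_i)=\eta_a^*(H)\varepsilon_{h_\ell}(\lambda_i)+\delta_{h_{\ell+1}}(\lambda_i)$; substituting the inductive expression and telescoping the geometric-type sum yields $\varepsilon_{h_{\ell+1}}(\lambda_i)=\sum_{k=1}^{\ell+1}\eta_a^*(H)^{\ell+1-k}\delta_{h_k}(\lambda_i)$, and symmetrically for the starred quantities. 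Carrying this from $\ell=1$ up to $\ell=n$ closes the induction and establishes (\ref{Multi_Correction_Err_fun})--(\ref{Multi_Correction_Err_fun_Weak_Adjoint}).

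For the eigenvalue bound (\ref{Multi_Correction_Err_eigen}) I would revisit the last line of Theorem \ref{Error_Estimate_Theorem}: its proof via \cite[Section 8]{BabuskaOsborn} controls $|\lambda-\widehat\lambda_h|$ by the product of the best $\|\cdot\|_{\mathbb V}$-approximation of $M(\lambda)$ and of $M^*(\lambda)$ by the trial and test spaces actually used. In the final correction step these are $\mathbb V_{H,h_n}$ and $\mathbb V^*_{H,h_n}$, and the intermediate bounds (\ref{Definition_Varepsilon_k+1})--(\ref{Definition_Varepsilon_k+1_Adjoint}) obtained inside the proof of Theorem \ref{Error_Estimate_One_Correction_Theorem} show precisely that these best-approximation quantities are $\lesssim\varepsilon_{h_n}(\lambda_i)$ and $\lesssim\varepsilon^*_{h_n}(\lambda_i)$. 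Combining, $|\widehat\lambda_i^{h_n}-\lambda_i|\lesssim\varepsilon_{h_n}(\lambda_i)\varepsilon^*_{h_n}(\lambda_i)$, which is (\ref{Multi_Correction_Err_eigen}).

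The main obstacle is keeping the hidden constants uniform in $n$: each invocation of Theorem \ref{Error_Estimate_One_Correction_Theorem} introduces a multiplicative constant $C$, so a naive iteration would produce $C^{n-1}$. The remedy is to fix $C$ once and for all (it depends only on $A$, $n(x)$, $\Omega$ and the shape regularity, not on $\ell$), then choose the coarse mesh size $H$ small enough that $C\,\eta_a^*(H)\le 1$ — possible since $\eta_a^*(H)\to0$ by Lemma \ref{Negative_norm_estimate_Lemma} — and verify that the induction propagates with this single constant; the same smallness of $\eta_a^*(H)$ is what guarantees the augmented eigenvalue problems (\ref{Eigen_Augment_Problem})--(\ref{Eigen_Augment_Problem_Adjoint}) remain small perturbations to which the Babuška--Osborn theory applies at every level. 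Once this is arranged, the rest is bookkeeping of the geometric-type sums defining $\varepsilon_{h_n}$ and $\varepsilon^*_{h_n}$.
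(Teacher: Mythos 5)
Your proof follows essentially the same route as the paper's: seed the induction with the standard finite element estimates (Theorem \ref{Error_Estimate_Theorem} plus Lemma \ref{Negative_norm_estimate_Lemma}, using $\eta_a^*(h_1)\le\eta_a^*(H)$), iterate Theorem \ref{Error_Estimate_One_Correction_Theorem} and telescope the recursion $\varepsilon_{h_{\ell+1}}(\lambda_i)=\eta_a^*(H)\varepsilon_{h_{\ell}}(\lambda_i)+\delta_{h_{\ell+1}}(\lambda_i)$, and obtain the eigenvalue bound from the Babu\v{s}ka--Osborn product estimate applied to the spaces used in the final correction. Your closing remark on keeping the hidden constants uniform in $n$ (fixing $C$ and taking $H$ small enough that $C\,\eta_a^*(H)\le 1$) addresses a point the paper passes over silently, and it is consistent with the condition $C\beta\eta_a^*(H)<1$ that the paper only imposes later, in the corollary.
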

\begin{proof}
First, we set $\varepsilon_{h_1}(\lambda_i):= \delta_{h_1}(\lambda_i)$ and
 $\varepsilon_{h_1}^*(\lambda_i):=\delta_{h_1}^*(\lambda_i)$.
Then the following estimates hold
\begin{eqnarray}
\Theta(M(\lambda_i),M_{h_1}(\lambda_i)) &\lesssim& \varepsilon_{h_1}(\lambda_i),\label{Multi_Correction_Err_fun_h_1}\\
\Phi(M(\lambda_i),M_{h_1}(\lambda_i)) &\lesssim&\eta_a^*(h_1) \varepsilon_{h_1}(\lambda_i)\leq \eta_a^*(H) \varepsilon_{h_1}(\lambda_i),\label{Multi_Correction_Err_fun_Weak_h_1}\\
\Theta(M^*(\lambda_i),M^*_{h_1}(\lambda_i)) &\lesssim& \varepsilon^*_{h_1}(\lambda_i),\label{Multi_Correction_Err_fun_Adjoint_h_1}\\
\Phi(M^*(\lambda_i),M^*_{h_1}(\lambda_i)) &\lesssim&\eta_a(h_1) \varepsilon^*_{h_1}(\lambda_i)\leq \eta_a(H) \varepsilon^*_{h_1}(\lambda_i).\label{Multi_Correction_Err_fun_Weak_Adjointh_1}
\end{eqnarray}

By recursive relation and Theorem \ref{Error_Estimate_One_Correction_Theorem},
{ we derive}
\begin{eqnarray}\label{epsilon_n_1}
\Theta(M(\lambda_i),M_{h_n}(\lambda_i))&\lesssim&\varepsilon_{h_n}(\lambda_i)
= \eta_a^*(H)\varepsilon_{h_{n-1}}(\lambda_i)
+\delta_{h_n}(\lambda_i)\nonumber\\
&\lesssim&\eta_a^*(H)^2\varepsilon_{h_{n-2}}(\lambda_i)+
\eta_a^*(H)\delta_{h_{n-1}}(\lambda_i)+\delta_{h_n}(\lambda_i)\nonumber\\
&\lesssim&\sum\limits_{k=1}^n\eta_a^*(H)^{n-k}\delta_{h_{\ell}}(\lambda_i)
\end{eqnarray}
and
\begin{eqnarray}\label{Error_n-1_Negative_norm}
\Phi(M(\lambda_i),M_{h_n}(\lambda_i))&\lesssim& \eta_a^*(H)\sum\limits_{k=1}^n\eta_a^*(H)^{n-k}\delta_{h_{\ell}}(\lambda_i).
\end{eqnarray}
These are the estimates (\ref{Multi_Correction_Err_fun}) and (\ref{Multi_Correction_Err_fun_Weak}) and the
estimates (\ref{Multi_Correction_Err_fun_Adjoint}) and (\ref{Multi_Correction_Err_fun_Weak_Adjoint}) can be proved similarly.
From Theorem \ref{Error_Estimate_Theorem}, (\ref{Multi_Correction_Err_fun}) and (\ref{Multi_Correction_Err_fun_Adjoint}),
we can obtain the estimate (\ref{Multi_Correction_Err_eigen}).
\end{proof}
In order to give the final error estimate results for the eigenpair approximations by the multilevel correction method,
we assume the following properties for the error estimates hold \cite{BrennerScott,Ciarlet,Shaidurov}
\begin{eqnarray}\label{Relations}
\delta_{h_{\ell}+1}(\lambda_i)\approx \frac{1}{\beta}\delta_{h_{\ell}}(\lambda_i),
\ \ \  \delta_{h_{\ell}+1}^*(\lambda_i)\approx \frac{1}{\beta}\delta_{h_{\ell}}^*(\lambda_i)
\end{eqnarray}
when the mesh sizes $h_{\ell}$, $h_{\ell+1}$ satisfy the relation (\ref{Mesh_Size_Relation}) and
the eigenfunctions have the corresponding regularities.
\begin{corollary}
After implementing Algorithm \ref{Multi_Correction}, the resultant
eigenpair approximations $(\{\lambda_{j}^{h_n}\}_{j=i}^{i+m-1},M_{h_n}(\lambda_i),M^*_{h_n}(\lambda_i))$
 have the following error estimates
\begin{eqnarray}
\Theta(M(\lambda_i),M_{h_n}(\lambda_i)) &\lesssim& \delta_{h_n}(\lambda_i),\label{Multi_Correction_Err_fun_2}\\
\Phi(M(\lambda_i),M_{h_n}(\lambda_i)) &\lesssim&\eta_a^*(H) \delta_{h_n}(\lambda_i),\label{Multi_Correction_Err_fun_Weak_2}\\
\Theta(M^*(\lambda_i),M^*_{h_n}(\lambda_i)) &\lesssim& \delta^*_{h_n}(\lambda_i),\label{Multi_Correction_Err_fun_Adjoint_2}\\
\Phi(M^*(\lambda_i),M^*_{h_n}(\lambda_i)) &\lesssim&\eta_a(H) \delta^*_{h_n}(\lambda_i),\label{Multi_Correction_Err_fun_Weak_Adjoint_2}\\
|\widehat{\lambda}_{i}^{h_n}-\lambda_i|&\lesssim&\delta_{h_n}(\lambda_i)\delta^*_{h_n}(\lambda_i),
\label{Multi_Correction_Err_eigen_2}
\end{eqnarray}
when the mesh size $H$ is small enough, the conditions (\ref{Relations}), $C\beta\eta_a^*(H) <1$ and $C\beta\eta_a(H)<1$ hold for the hidden constant $C$.
\end{corollary}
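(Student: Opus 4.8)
The plan is to reduce the entire corollary to the single scalar estimate $\varepsilon_{h_n}(\lambda_i)\lesssim\delta_{h_n}(\lambda_i)$ together with its adjoint analogue $\varepsilon^*_{h_n}(\lambda_i)\lesssim\delta^*_{h_n}(\lambda_i)$, where the hidden constants must be independent of the number of levels $n$. Granting these two bounds, (\ref{Multi_Correction_Err_fun_2}) follows at once from (\ref{Multi_Correction_Err_fun}), (\ref{Multi_Correction_Err_fun_Weak_2}) from (\ref{Multi_Correction_Err_fun_Weak}), the adjoint pair (\ref{Multi_Correction_Err_fun_Adjoint_2})--(\ref{Multi_Correction_Err_fun_Weak_Adjoint_2}) from (\ref{Multi_Correction_Err_fun_Adjoint})--(\ref{Multi_Correction_Err_fun_Weak_Adjoint}), and (\ref{Multi_Correction_Err_eigen_2}) from (\ref{Multi_Correction_Err_eigen}) by multiplying the two estimates; the standing assumption that $H$ be small is what makes Theorems \ref{Error_Estimate_Theorem} and \ref{Error_Estimate_Final_Theorem}, on which those bounds rest, applicable.

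To prove $\varepsilon_{h_n}(\lambda_i)\lesssim\delta_{h_n}(\lambda_i)$ I would first iterate the approximation property (\ref{Relations}). From $\delta_{h_{\ell}}(\lambda_i)\approx\beta^{-1}\delta_{h_{\ell-1}}(\lambda_i)$ one extracts a constant $C$ --- precisely the constant figuring in the hypotheses $C\beta\eta_a^*(H)<1$ and $C\beta\eta_a(H)<1$ --- with $\delta_{h_{\ell-1}}(\lambda_i)\le C\beta\,\delta_{h_{\ell}}(\lambda_i)$, and composing this $n-k$ times gives $\delta_{h_k}(\lambda_i)\le(C\beta)^{n-k}\delta_{h_n}(\lambda_i)$ for $k=1,\dots,n$. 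Inserting this into the definition $\varepsilon_{h_n}(\lambda_i)=\sum_{k=1}^{n}\eta_a^*(H)^{n-k}\delta_{h_k}(\lambda_i)$ turns the sum into a geometric one,
\begin{eqnarray*}
\varepsilon_{h_n}(\lambda_i) &\le& \delta_{h_n}(\lambda_i)\sum_{k=1}^{n}\big(C\beta\eta_a^*(H)\big)^{n-k} = \delta_{h_n}(\lambda_i)\sum_{j=0}^{n-1}\big(C\beta\eta_a^*(H)\big)^{j},
\end{eqnarray*}
and since $C\beta\eta_a^*(H)<1$ the series is bounded by $\big(1-C\beta\eta_a^*(H)\big)^{-1}$, a bound that does not depend on $n$; hence $\varepsilon_{h_n}(\lambda_i)\lesssim\delta_{h_n}(\lambda_i)$. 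The same argument, now using the second relation in (\ref{Relations}) and the condition $C\beta\eta_a(H)<1$, yields $\varepsilon^*_{h_n}(\lambda_i)\lesssim\delta^*_{h_n}(\lambda_i)$. These smallness conditions are consistent with taking $H$ small, since $\eta_a^*(H)\to0$ and $\eta_a(H)\to0$ as $H\to0$ by Lemma \ref{Negative_norm_estimate_Lemma}.

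I do not expect a genuine obstacle here; the one point that must be handled with care is the uniformity in $n$. The constant relating $\varepsilon_{h_n}(\lambda_i)$ to $\delta_{h_n}(\lambda_i)$ must not deteriorate as the number of correction levels grows, and this is exactly why one imposes the contraction-type conditions $C\beta\eta_a^*(H)<1$ and $C\beta\eta_a(H)<1$ rather than merely $H$ small: they keep the amplification factor $\eta_a^*(H)$ (resp. $\eta_a(H)$) picked up once at every correction step in Theorem \ref{Error_Estimate_One_Correction_Theorem} from overwhelming the factor $(\beta\eta_a^*(H))^{n-k}$ multiplying $\delta_{h_n}(\lambda_i)$ in the $k$-th term. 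Everything else is the routine substitution described above.
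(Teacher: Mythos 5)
Your proposal is correct and follows essentially the same route as the paper: iterate the relation (\ref{Relations}) to write $\delta_{h_k}(\lambda_i)\lesssim(\,C\beta)^{n-k}\delta_{h_n}(\lambda_i)$, turn $\varepsilon_{h_n}(\lambda_i)=\sum_{k=1}^{n}\eta_a^*(H)^{n-k}\delta_{h_k}(\lambda_i)$ into a geometric series summable under $C\beta\eta_a^*(H)<1$, and then invoke Theorem \ref{Error_Estimate_Final_Theorem}. Your explicit attention to the uniformity of the constant in $n$ is a welcome clarification of what the paper leaves implicit, but it is the same argument.
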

\begin{proof}
When the mesh size $H$ is small enough, the conditions (\ref{Relations}) and $C\beta\eta_a^*(H) <1$ hold,
we have the following inequalities
\begin{eqnarray*}
\varepsilon_{h_n}(\lambda_i)&=&\sum_{k=1}^{n}\eta_a^*(H)^{n-k}\delta_{h_{\ell}}(\lambda_i)
\approx\left(\sum_{k=1}^{n}\Big(\eta_a^*(H)\beta\Big)^{n-k}\right)\delta_{h_n}(\lambda_i)\nonumber\\
&\lesssim& \frac{1}{1-C\beta\eta_a^*(H)}\delta_{h_n}(\lambda_i)\lesssim \delta_{h_n}(\lambda_i).
\end{eqnarray*}
Combining the above estimate and Theorem \ref{Error_Estimate_Final_Theorem}, we can obtain the desired results
(\ref{Multi_Correction_Err_fun_2}) and (\ref{Multi_Correction_Err_fun_Weak_2}). Similar argument can lead
to the results (\ref{Multi_Correction_Err_fun_Adjoint_2}) and (\ref{Multi_Correction_Err_fun_Weak_Adjoint_2}).
Then the result (\ref{Multi_Correction_Err_eigen_2}) can be derived from (\ref{Multi_Correction_Err_fun_2})-(\ref{Multi_Correction_Err_fun_Weak_Adjoint_2}) and the proof is complete.
\end{proof}

\section{Numerical results}\label{Numerical_Results}

In this section, we present four examples to validate the efficiency of the proposed multilevel
correction scheme defined by Algorithm \ref{Multi_Correction}. The conforming linear finite element
 is used in the discretization for all the examples.

%
%
%
%

\subsection{Transmission eigenvalue problem on the unit disk}
Let $\Omega$ be a unit disk, $A=aI$ for some constant $a> 1$ and $n> 1$ is also a constant.
The solutions of \eqref{Problem} can be written as
\begin{equation}\label{vdisk}
 w = J_m(kr\sqrt{n/a})\cos(m\theta) ,\quad
 v = J_m(kr)\cos(m\theta) ,\quad m=0,1,2,\cdots,
\end{equation}
where $J_m(z)$ is the first kind Bessel function of order $m$.
In order to satisfy the boundary condition $w=v$ on $\partial\Omega$, one can choose
\begin{equation}\label{wdisk}
w = \frac{J_m(k)}{J_m(k\sqrt{n/a})} J_m\big(kr\sqrt{n/a}\big)\cos(m\theta),\quad m=0,1,2,\cdots.
\end{equation}
Of course, the trigonometric functions in \eqref{vdisk}-\eqref{wdisk} can also be chosen as
 $\sin(m\theta)$. Ignoring the trigonometric functions and using the recursive identity
\begin{equation*}
\frac{d J_m(z)}{d z} =  \frac{m}{z}J_m(z) - J_{m+1}(z) ,
\end{equation*}
we obtain that
\begin{eqnarray*}
&& \frac{\partial w}{\partial\nu_A} 
= a\frac{J_m(k)}{J_m(k\sqrt{n/a})} k \sqrt{\frac{n}{a}} \left( \frac{m}{kr\sqrt{n/a}}J_m\big(kr\sqrt{n/a}\big)
- J_{m+1}\big(kr\sqrt{n/a}\big) \right) ,\\
&& \frac{\partial v}{\partial\nu} 
= k \left( \frac{m}{kr}J_m(kr) - J_{m+1}(kr) \right).
\end{eqnarray*}
Thus the boundary condition for the normal derivative implies that
\begin{eqnarray}
\nonumber
&&J_{m+1}(k)J_m\big(k\sqrt{n/a}\big) - \sqrt{na}J_m(k)J_{m+1}\big(k\sqrt{n/a}\big)\\
\label{eqn-k}
&&\qquad\qquad\qquad\qquad\quad +\ (a-1)\frac{m}{k}J_m(k)J_m\big(k\sqrt{n/a}\big) = 0.
\end{eqnarray}
From \eqref{eqn-k}, we can get the exact transmission eigenvalues and the
corresponding eigenfunctions by using \eqref{vdisk}-\eqref{wdisk}.  In this example, we take $a=2$ and $n=8$.
The eigenvalues have multiplicity 2 for $m > 0$ due to the trigonometric terms $\cos(m\theta)$ and $\sin(m\theta)$.



We use the quasi-uniform meshes $\{\mathcal{T}_{h_\ell}\}_{\ell}$ and solve the transmission eigenvalue problem by using Algorithm \ref{Multi_Correction}. Let $k_{j,\ell}$ be the $j$-th transmission eigenvalue computed
on the mesh $\mathcal{T}_{h_{\ell}}$, while $k_j$ be the $j$-th exact eigenvalue obtained from \eqref{eqn-k}.
The left part of Figure \ref{fig-circle} shows the $\log N_{\ell}$-$\log \sum_{j=1}^6 |k_{j,\ell}-k_j|$ curve,
where $N_{\ell}$ is the number of degrees of freedom(DOFs) with the mesh $\mathcal{T}_{h_{\ell}}$ which is almost double of the number of nodes.
It indicates that the sum of the errors for the first six transmission eigenvalues decreases
as $O(N_{\ell}^{-1})$ or has the $O(h^2)$ convergence order as implied in Theorem \ref{Error_Estimate_Final_Theorem}.
Table \ref{tab-circle} depicts the first six transmission eigenvalues
computed by Algorithm \ref{Multi_Correction} on the finest mesh.

\begin{figure}[htp]
\caption{Eigenvalue and eigenfunction errors on the unit disk}\label{fig-circle}
\begin{center}
\includegraphics[width=6cm, height=4.5cm]{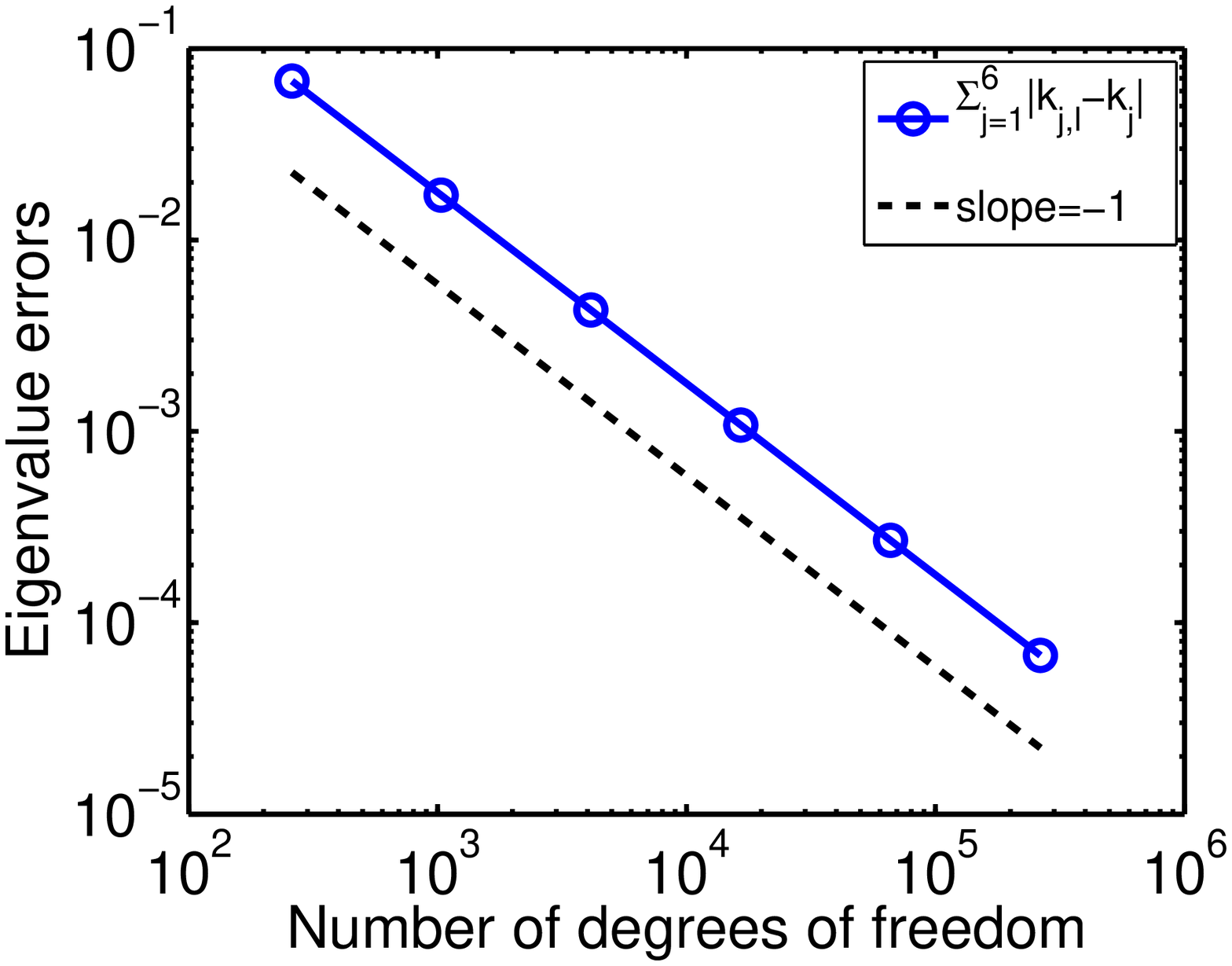}
\includegraphics[width=6cm, height=4.5cm]{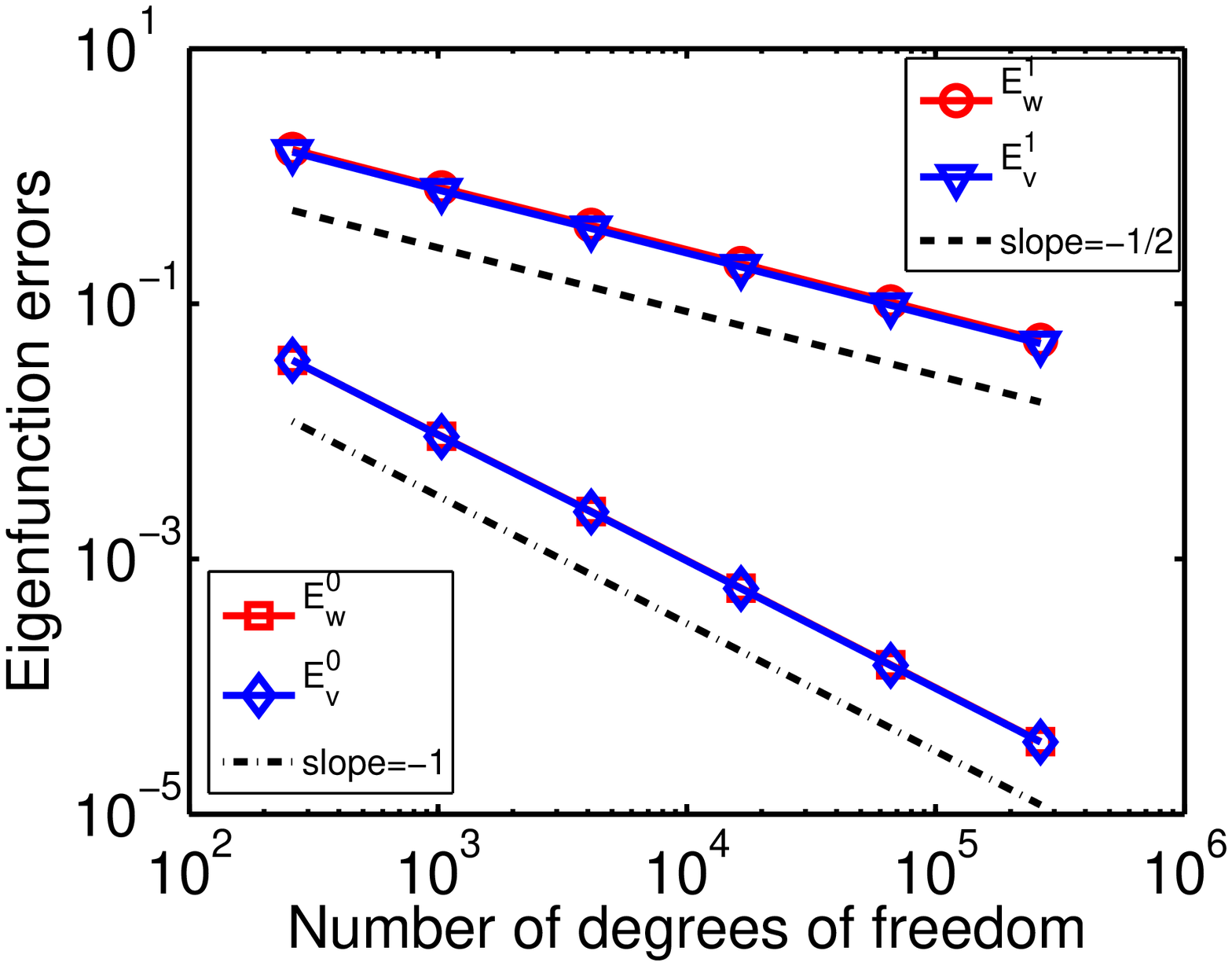}
\end{center}
\end{figure}

\begin{table}[htp]
\caption{The first six transmission eigenvalues computed on the unit disk.}
\label{tab-circle}
\setlength{\tabcolsep}{10pt}
\begin{center}
\begin{tabular}{cccccccc}\hline
$N_\ell$  & $k_{1,\ell}$ & $k_{2,\ell}$  &  $k_{3,\ell}$  &  $k_{4,\ell}$  &  $k_{5,\ell}$  &  $k_{6,\ell}$\\ \hline
264194 & 0.7176 & 0.7176 & 1.2106  & 1.2106  &  1.6841  &  1.6841  \\ \hline
\end{tabular}
\end{center}
\end{table}

Since the eigenvalues have multiplicity $2$ for $m>0$, we compute the distance to the eigenspaces, i.e.
\begin{equation}
E_u^s = \sum_{j=1}^6\min_{\alpha\in\mathcal{C}^2}\left\| u_{j,\ell} - \sum_{i=1}^2\alpha_i u_{j_i}\right\|_{s},
 \ \ u=w,v,\ s=0,1,
\end{equation}
where $u_{j_1}, u_{j_2}$ are the corresponding eigenfunctions to the eigenvalue $k_j$.
The right part of Figure \ref{fig-circle} shows the eigenfunction errors versus the number
 of elements $N_{\ell}$. It is observed that the $H^1$-error decreases as $O(N_{\ell}^{-1/2})$
 or has the $O(h)$ convergence order, and the $L^2$-error decreases as $O(N_{\ell}^{-1})$ or has
  the $O(h^2)$ convergence order as implied in Theorem \ref{Error_Estimate_Final_Theorem}.


%
%

\subsection{Transmission eigenvalue problem on the unit square}

In this subsection, let $\Omega=(0,1)^2$ be the unit square and
\begin{equation*} 
A(x)=\begin{pmatrix}
2+x_1^2 & x_1x_2\\
x_1x_2 & 2+x_2^2
\end{pmatrix},\quad n(x)= 4+2(x_1+x_2).
\end{equation*}
It is easy to verify that the symmetric matrix $A(x)$ and the index of refraction
$n(x)$ satisfy the condition \eqref{cond-A-n}.

\begin{figure}[htp]
\caption{Eigenvalue and eigenfunction errors on the unit square}
\label{fig-square}
\begin{center}
\includegraphics[width=6cm, height=4.5cm]{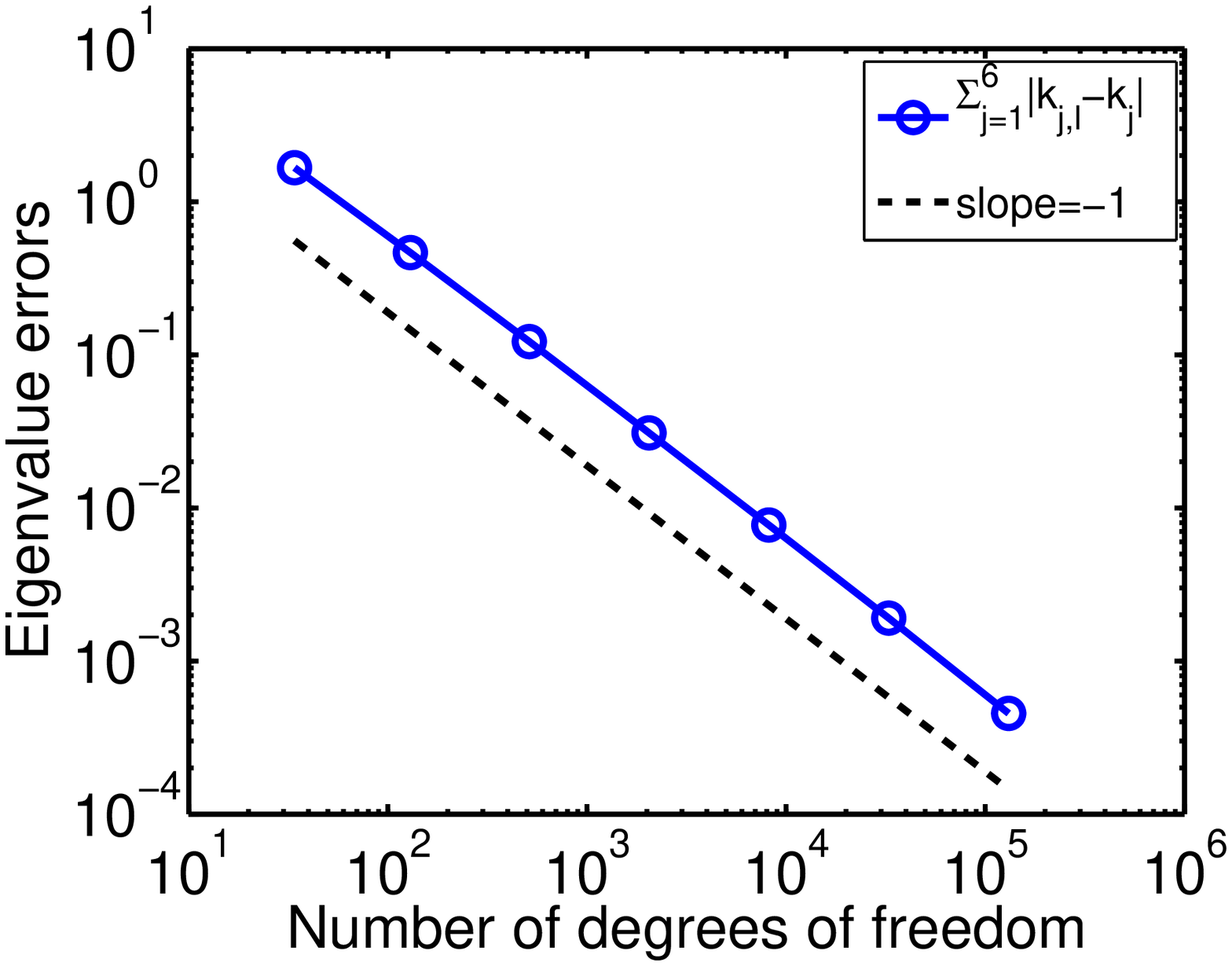}
\includegraphics[width=6cm, height=4.5cm]{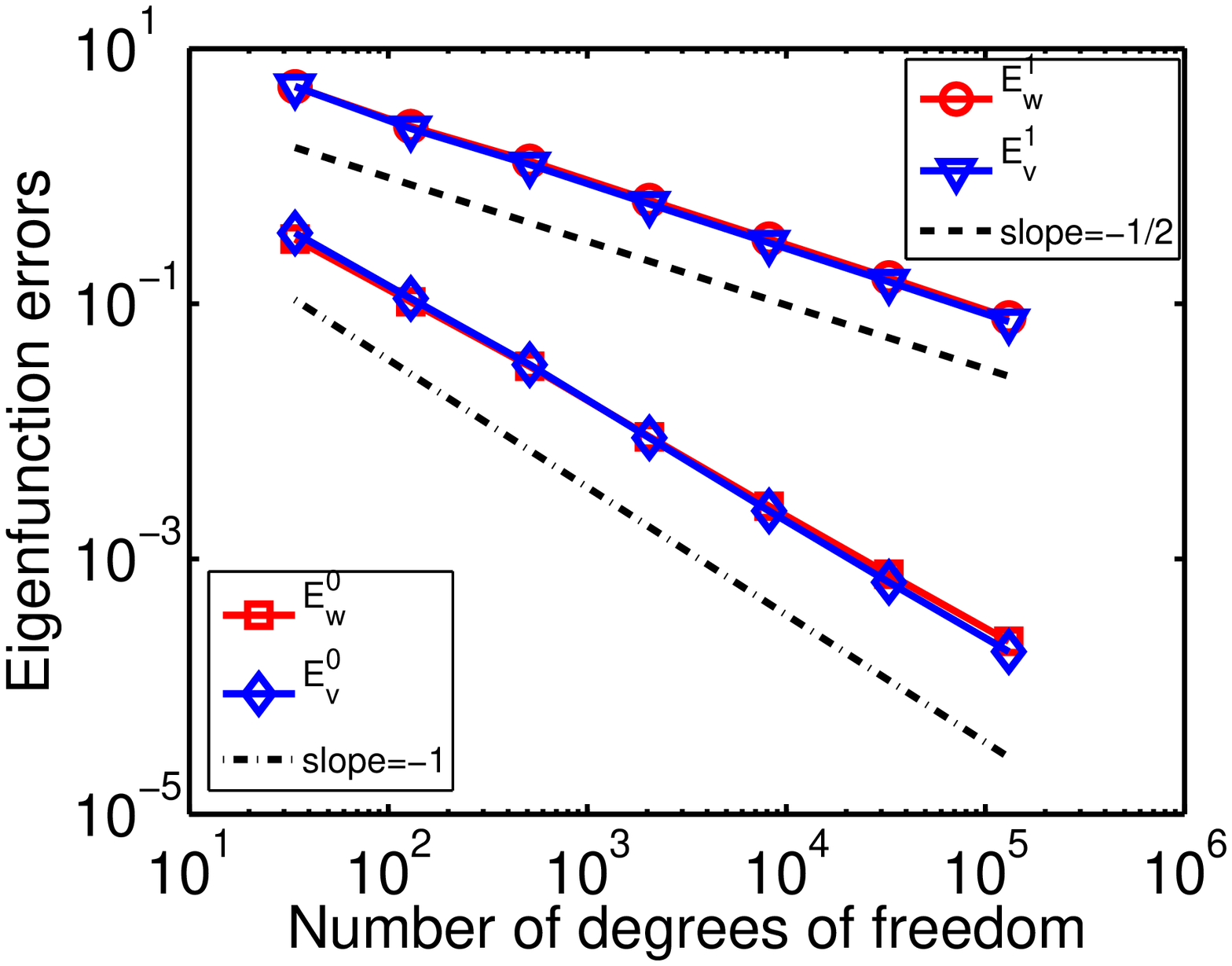}
\end{center}
\end{figure}

The quasi-uniform meshes $\{\mathcal{T}_{h_{\ell}}\}_{\ell}$ are used in Algorithm \ref{Multi_Correction}
to solve the transmission eigenvalue problem. Let $k_{j,l}$ be the $j$-th transmission eigenvalue computed on the mesh $\mathcal{T}_{h_{\ell}}$. In addition, let $k_j$ be $j$-th `exact' eigenvalue obtained numerically
 on a very fine mesh with the number of DOFs $N_{\ell}>10^6$.
The left part of Figure \ref{fig-square} shows the $\log N_{\ell}$-$\log\sum_{i=1]}^6 |k_{j,\ell}-k_j|$ curve,
 which again indicates that the sum of the errors for the first six transmission eigenvalues
 decreases as $O(N_{\ell}^{-1})$ or has the $O(h^2)$ convergence order.
Table \ref{tab-square} depicts the first six transmission eigenvalues computed by
Algorithm \ref{Multi_Correction} on the mesh with $N_{\ell}=131074$.

\begin{table}[htp]
\caption{The first six transmission eigenvalues computed on the unit square.}
\label{tab-square}
\setlength{\tabcolsep}{10pt}
\begin{center}
\begin{tabular}{cccccccc}\hline
$N_\ell$  & $k_{1,\ell}$ & $k_{2,\ell}$  &  $k_{3,\ell}$  &  $k_{4,\ell}$  &  $k_{5,\ell}$  &  $k_{6,\ell}$\\ \hline
131074 & 1.4808 & 1.7425 & 2.3340  & 3.1636  &  3.6559  &  3.7665  \\ \hline
\end{tabular}
\end{center}
\end{table}

The right part of Figure \ref{fig-square} shows the eigenfunction errors
\begin{equation}
E_u^s = \sum_{j=1}^6\|u_{j,\ell} - u_j\|_{s},\ \ u=w,v,\ s=0,1.
\end{equation}
It is also observed that the $H^1$-error decreases as $O(N_{\ell}^{-1/2})$ or has the $O(h)$ convergence order,
and the $L^2$-error decreases as $O(N_{\ell}^{-1})$ or has the $O(h^2)$ convergence order as implied in Theorem \ref{Error_Estimate_Final_Theorem}.


%
%
%

\subsection{Transmission eigenvalue problem on the unit square with other conditions on $A(x)$ and $n(x)$}

In this subsection, let $\Omega=(0,1)^2$ be the unit square.
It is stated in Remark \ref{Remark_Condition}
that our algorithm behaves well if the condition \eqref{cond-A-n} is replaced by the condition \eqref{cond-A-n-2}.
Actually, by using similar arguments, we can also get the convergence results under
the condition \eqref{cond-A-n-2}.
Let
\begin{equation*} 
A(x)=\begin{pmatrix}
\frac12+\frac18 x_1^2 & \frac18 x_1x_2\\
\frac18 x_1x_2 & \frac12+\frac18 x_2^2
\end{pmatrix},\quad n(x)= \frac14+\frac18(x_1+x_2).
\end{equation*}
It is easy to verify that the symmetric matrix $A(x)$ and the index of refraction $n(x)$ satisfy the condition \eqref{cond-A-n-2}.

\begin{figure}[htp]
\caption{Eigenvalue and eigenfunction errors on the unit square with other conditions on $A(x)$ and $n(x)$}
\label{fig-squareB}
\begin{center}
\includegraphics[width=6cm, height=4.5cm]{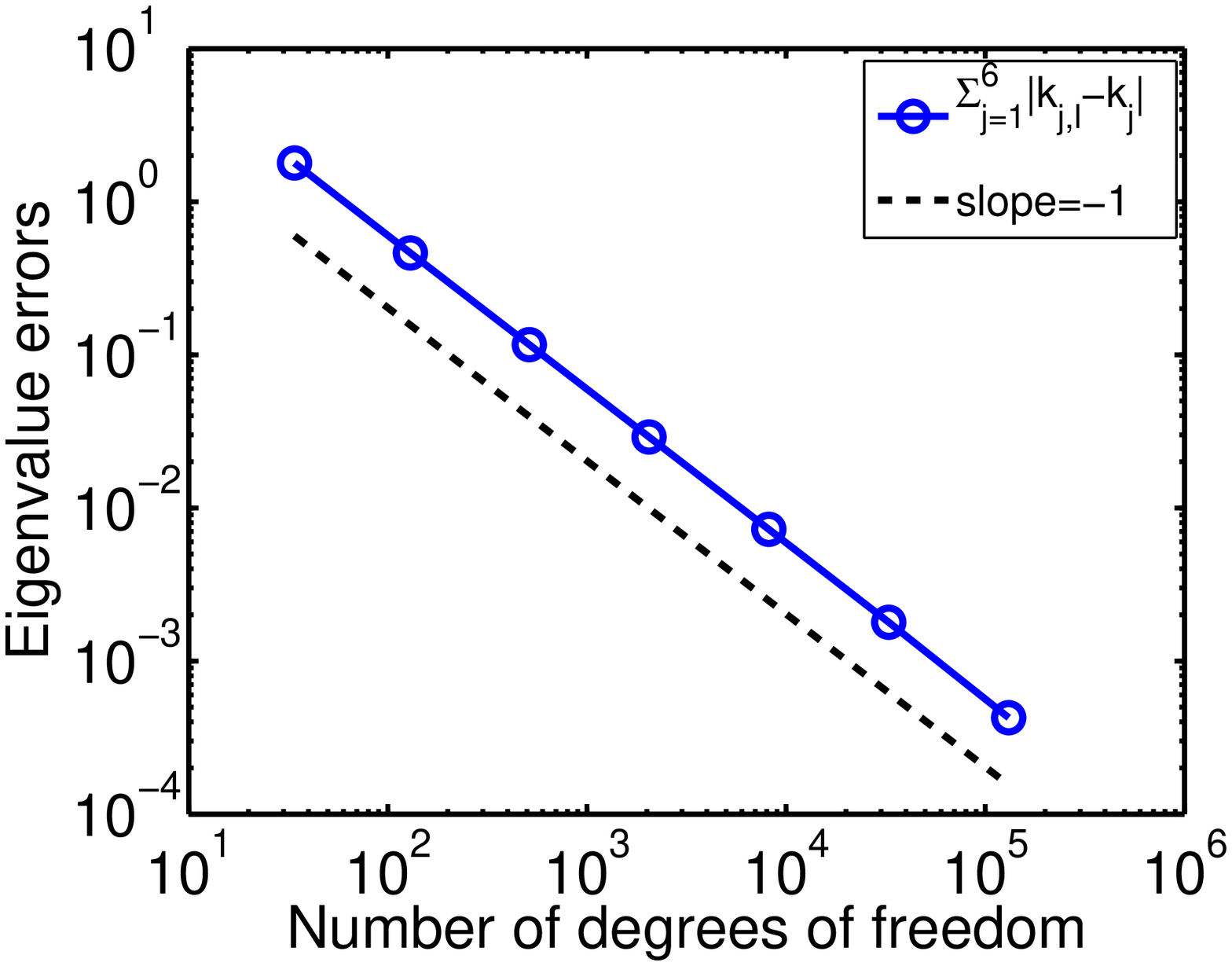}
\includegraphics[width=6cm, height=4.5cm]{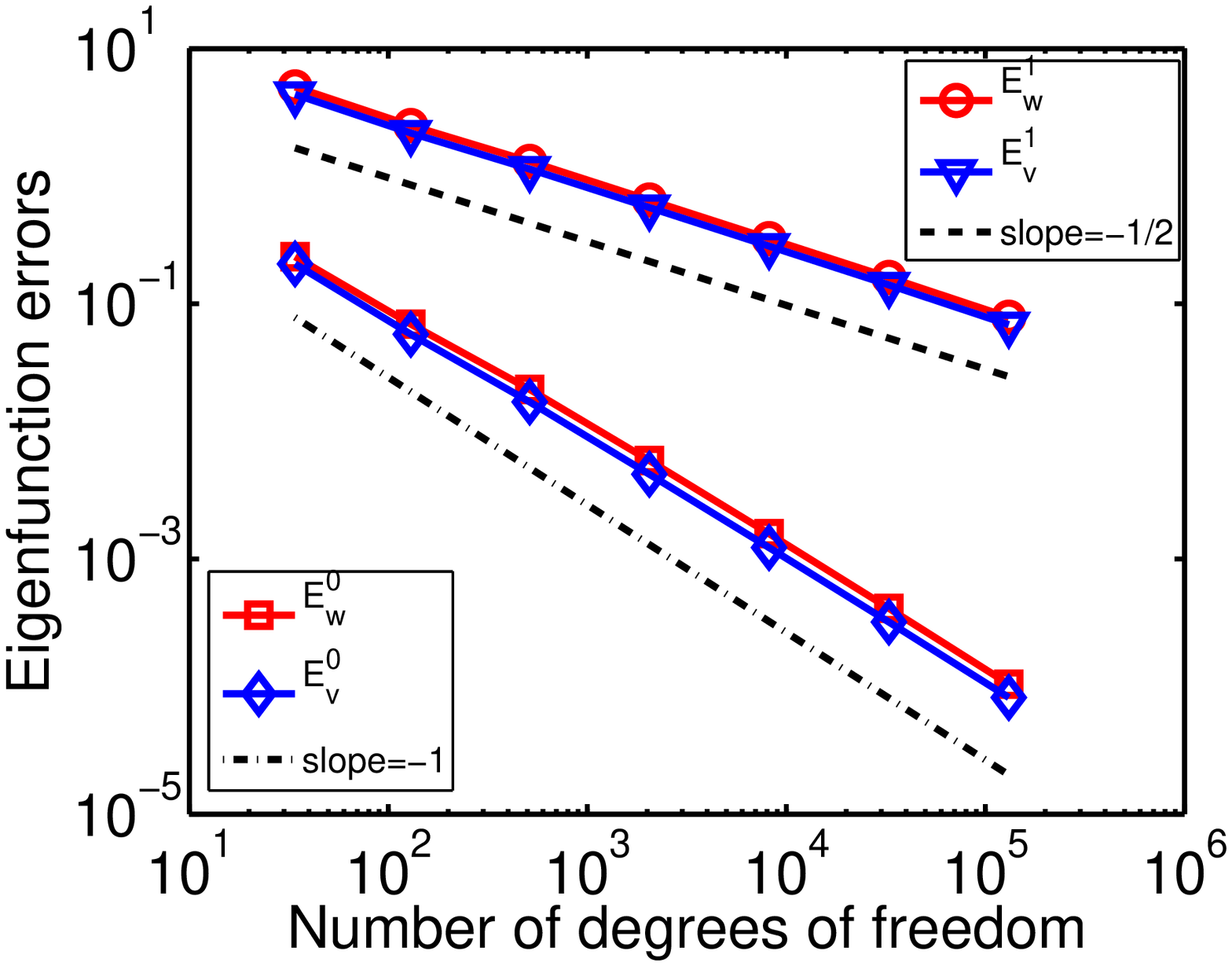}
\end{center}
\end{figure}

By using the same setting as in the previous subsection, we show the eigenvalue errors and eigenfunction errors in the left part and the right part of Figure \ref{fig-squareB}, respectively.
It again demonstrates that the convergence results in Theorem \ref{Error_Estimate_Final_Theorem} is also valid for this kind of coefficient matrix $A(x)$ and the index of refraction $n(x)$. In addition, the first six transmission eigenvalues computed by Algorithm \ref{Multi_Correction} are depicted in Table \ref{tab-squareB}.

\begin{table}[htp]
\caption{The first six transmission eigenvalues computed on the unit square.}
\label{tab-squareB}
\setlength{\tabcolsep}{10pt}
\begin{center}
\begin{tabular}{cccccc}\hline
$N_\ell$ & $k_{1,\ell}$ & $k_{2,\ell}$  &  $k_{3,\ell}$  &  $k_{4,\ell}$  &  $k_{5,\ell}$,  $k_{6,\ell}$\\ \hline
131074 & 2.6786 & 2.7995 & 3.8921  & 5.5341  &  5.8252 $\pm$ 0.8502i  \\ \hline
\end{tabular}
\end{center}
\end{table}


%
%

\subsection{Transmission eigenvalue problem on the L-shape domain}

In this subsection, let $\Omega=(-1,1)^2\backslash[0,1)\times(-1,0]$ be the L-shape domain and
\begin{equation*} 
A(x)=\begin{pmatrix}
2+x_1^2 & x_1x_2\\
x_1x_2 & 2+x_2^2
\end{pmatrix},\quad n(x)= 2+|x_1+x_2|.
\end{equation*}
It is easy to verify that the symmetric matrix $A(x)$ and the index of refraction $n(x)$ satisfy the condition \eqref{cond-A-n}.

\begin{figure}[htp]
\caption{Eigenvalue and eigenfunction errors on the L-shape domain}
\label{fig-lshape}
\begin{center}
\includegraphics[width=6cm, height=4.5cm]{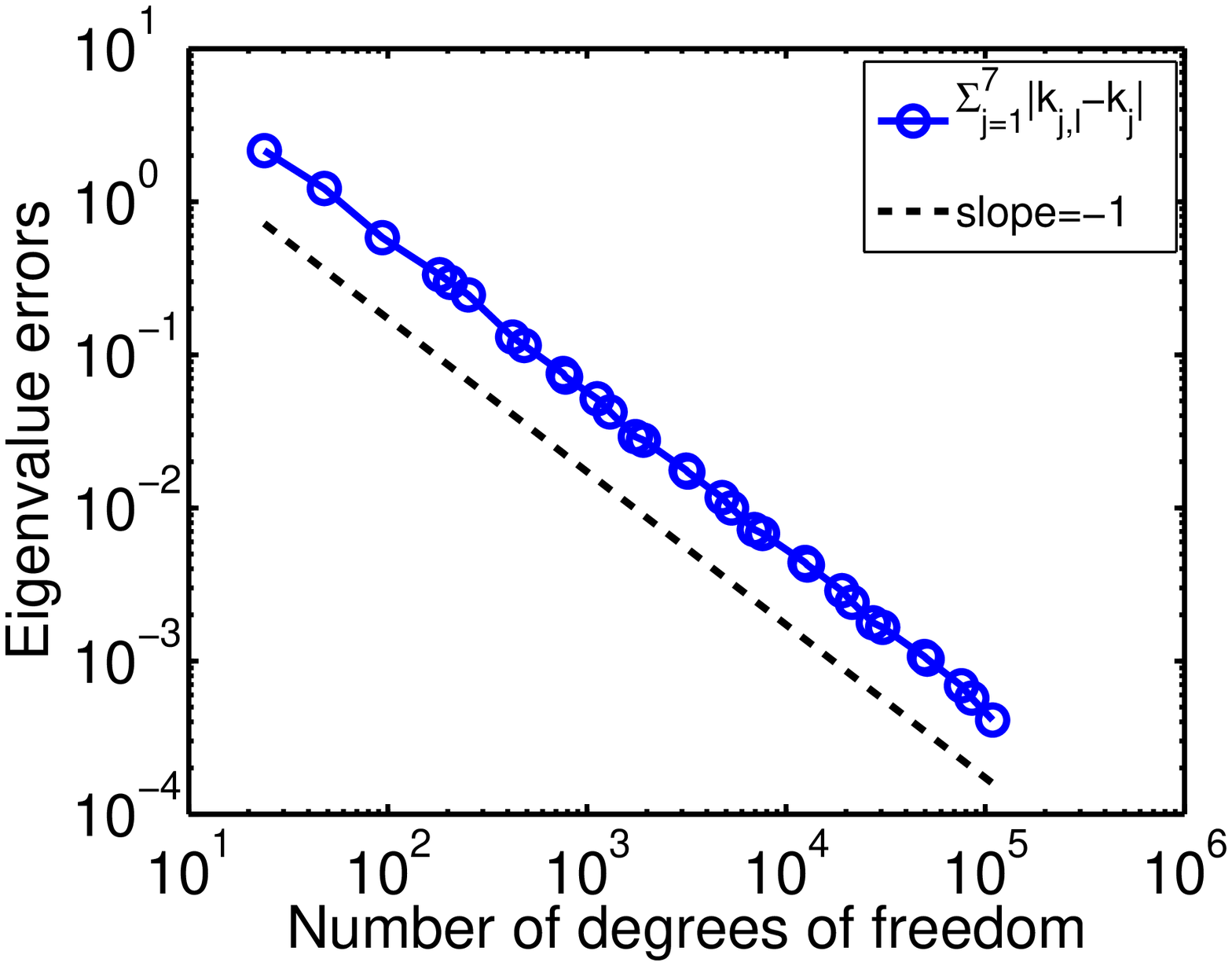}
\includegraphics[width=6cm, height=4.5cm]{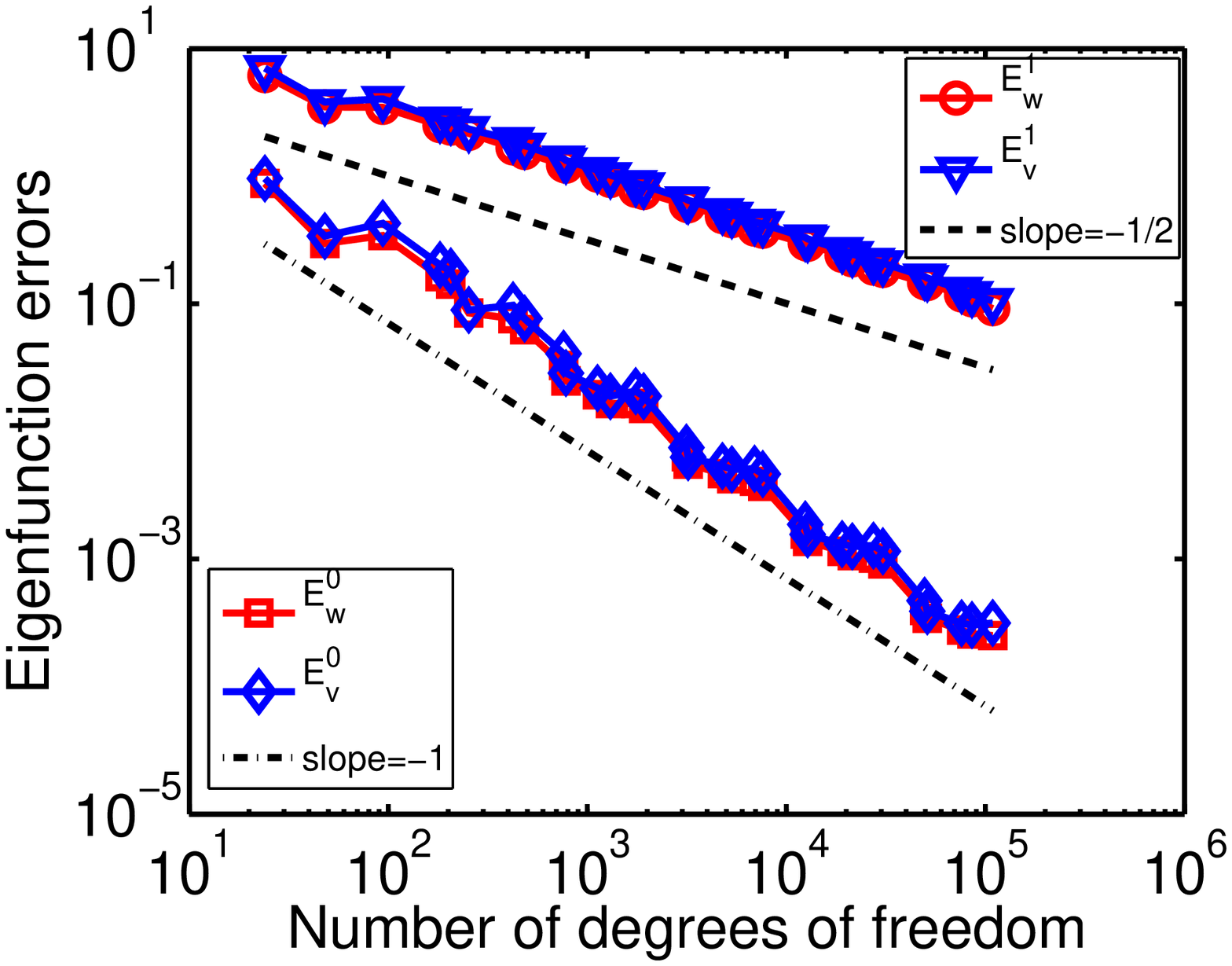}
\end{center}
\end{figure}

Since $\Omega$ has reentrant corner, the singularity of the eigenfunctions is expected.
We turn to apply the adaptive finite element method for the mesh refinement.
Here the ZZ recovery method (c.f. \cite{ZZ}) is adopted as the a posteriori
error estimator for eigenfunction and adjoint eigenfunction approximations.
Similarly, the `exact' eigenvalues are obtained numerically on a very fine
mesh with the number of DOFs $N_{\ell}>10^6$.

Figure \ref{fig-lshape} shows the numerical results for this example.
 Figure \ref{fig-lshape-mesh} shows the initial mesh and the mesh after $17$ adaptive iterations.
It is observed that  the multilevel correction method works well on the adaptive meshes and the computational complexity are also quasi-optimal. In addition, Table \ref{tab-lshape} depicts the first seven transmission eigenvalues computed by Algorithm \ref{Multi_Correction} on the mesh with $N_{\ell}=108914$.

\begin{figure}[htp]
\caption{Initial mesh and the mesh after 17 adaptive iterations}
\label{fig-lshape-mesh}
\begin{center}
\includegraphics[width=6cm, height=4.5cm]{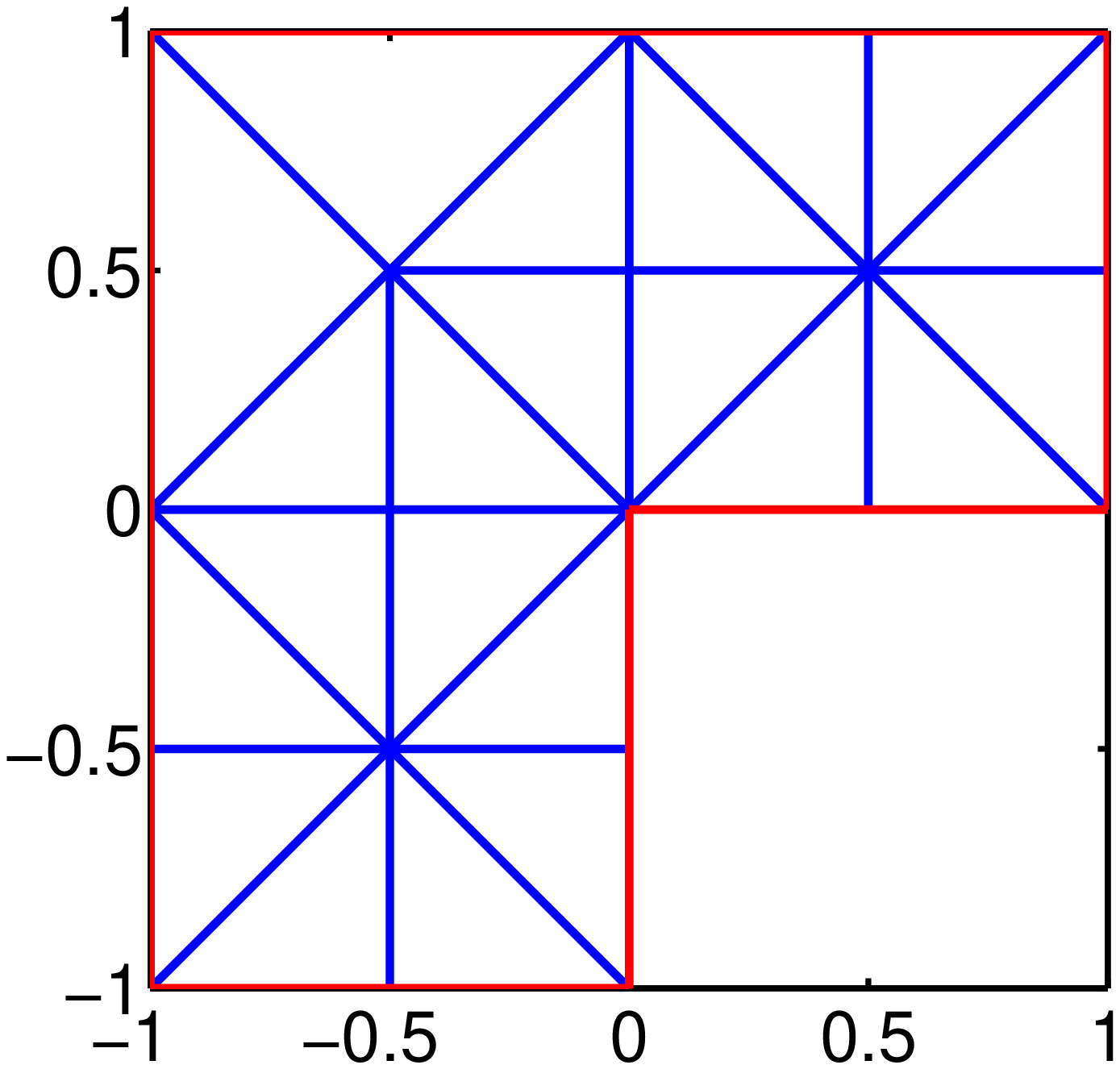}
\includegraphics[width=6cm, height=4.5cm]{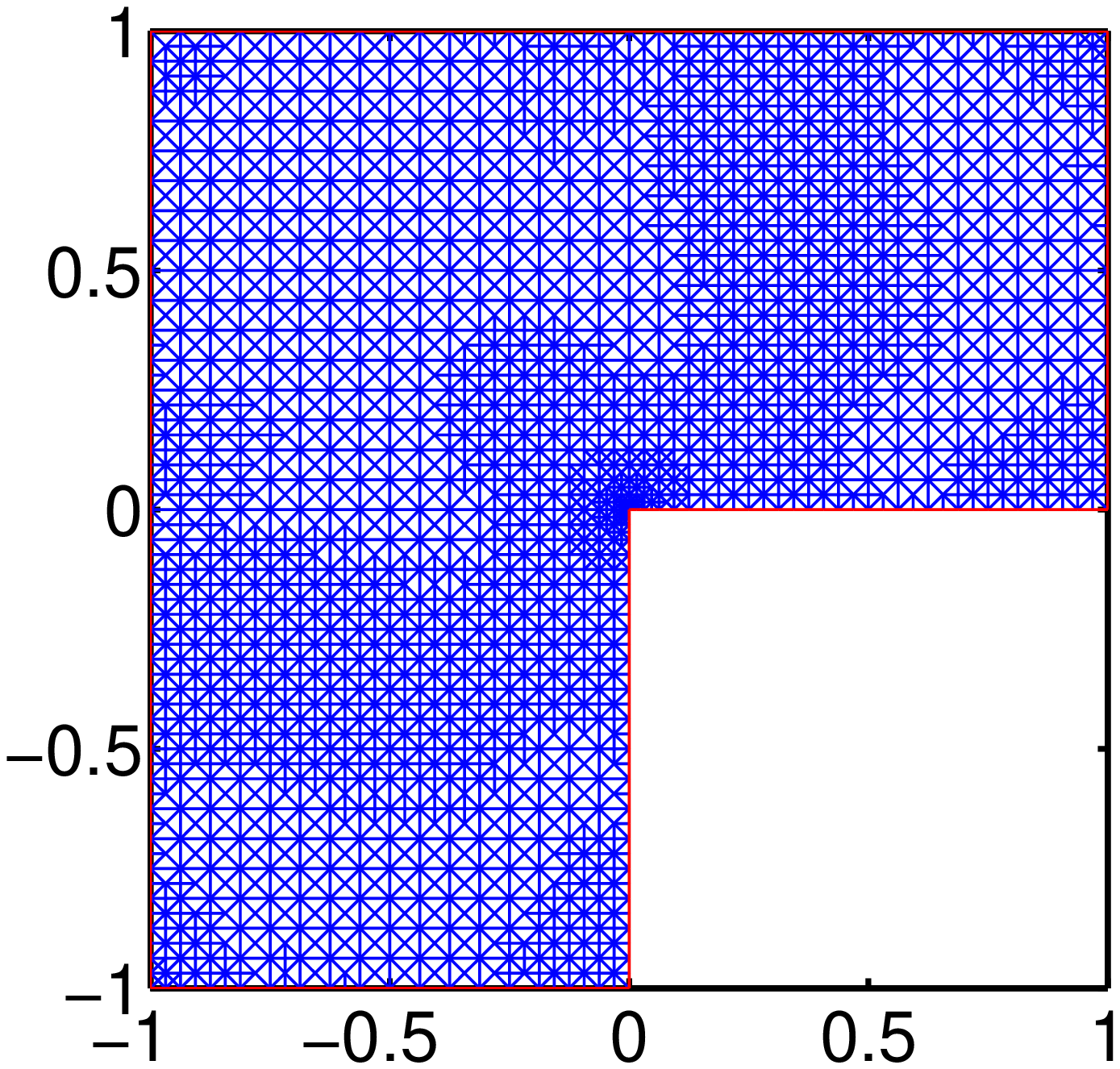}
\end{center}
\end{figure}

\begin{table}[htp]
\caption{The first seven transmission eigenvalues computed on the L-shape domain.}
\label{tab-lshape}
\setlength{\tabcolsep}{6pt}
\begin{center}
\begin{tabular}{ccccccc}\hline
$N_{\ell}$  & $k_{1,\ell}$ & $k_{2,\ell}$  &  $k_{3,\ell}$  &  $k_{4,\ell}$  &  $k_{5,\ell}$  &  $k_{6,\ell}$, $k_{7,\ell}$\\ \hline
108914 & 0.8740 & 1.5895 & 2.4038  & 2.6197  &  2.8764  &  3.0449 $\pm$ 0.0824i  \\ \hline
\end{tabular}
\end{center}
\end{table}


\section{Concluding remarks}
In this paper, we give the error estimates for the transmission eigenvalue problem by the
finite element method. Furthermore, based on the obtained error estimates in Theorem \ref{Error_Estimate_Theorem},
a type of multilevel correction method is proposed to
solve the transmission eigenvalue problem. In the multilevel correction method, we
transform the transmission eigenvalue solving in the finest finite element space
to a sequence of linear problems and some transmission eigenvalue solving
in a very low dimensional space. Since the main computational work is to solve the sequence of
linear problems, the multilevel correction method can improve the overfull efficiency
of the transmission eigenvalue solving. The numerical results also show the
 efficiency of the proposed numerical scheme.

%
\section*{Acknowledgments}
The work of Hehu Xie is supported in part
by the National Natural Science Foundations of China (NSFC 91330202, 11371026,
11001259, 11031006, 2011CB309703),  the National
Center for Mathematics and Interdisciplinary Science
the national Center for Mathematics and Interdisciplinary Science, CAS.
The work of Xinming Wu is supported in part by the National Natural Science Foundations of China (NSFC 91330202, 11301089).

\section*{References}

\end{document}